\theoremstyle{plain}
\newtheorem{theorem}{Theorem}[section]
\newtheorem{corollary}{Corollary}[section]
\newtheorem{proposition}{Proposition}[section]
\newtheorem{lemma}{Lemma}[section]
\newtheorem{definition}{Definition}[section]
\newtheorem{examples}{Examples}[section]
\theoremstyle{definition}
\def\P{{\mathbb P}}
\def\N{{\mathbb N}}
\def\E{{\mathbb E}}
\def\T{{T^{(2)}_n}}
\def\bx{{\bf x}}
\def\by{{\bf y}}
\def\nt{{\tilde n_t}}
\newcommand{\blue}[1]{{\color{blue} #1}}
\title{From the divergence between two measures to the shortest path between two observables }
\author{Miguel Abadi
\thanks{Instituto de Matem\'{a}tica e Estat\'{i}stica, Universidade de S\~{a}o Paulo.}
\and Rodrigo Lambert %{\small{*}}
\thanks{ Faculdade de Matem\'atica, Universidade Federal de Uberl\^andia, Uberl\^andia-MG, Brazil.}
\date{}
}
\begin{document}
\maketitle

\begin{abstract}
We consider two independent and stationary measures over $\chi^\N$, where $\chi$ is a finite or countable alphabet. 
For each pair of $n$-strings in the product space
we define $T_n^{(2)}$ as the length of the shortest path connecting one of them to the other.
Here the paths  are generated by the underlying dynamic of the measures.
If they are ergodic and have positive entropy we prove that, for almost every pair of realizations $(\bx,\by)$, $T^{(2)}_n/n$ concentrates in one, as $n$ diverges.
Under mild extra conditions we prove a large deviation principle. 
We also show that the fluctuations of $T_n^{(2)}$ converge (only) in distribution to a non-degenerated  distribution. These results are all linked to a quantity that compute the similarity between those two measures. It is the so-called divergence between two measures, wich is also introduced. 
Along this paper, several examples are provided.

\end{abstract}
\begin{center}
{\bf Running head:} The shortest path between two strings.\\
{\bf Subject class:} 37xx, 41A25, 60Axx , 60C05, 60Fxx. \\
{\bf Keywords:}  Poincar\'e recurrence,  shortest path, large deviations.
\end{center}

\blue{
\tableofcontents
}

\section{Introduction: the shortest-path function}

Suppose one has to built a communication net consisting in nodes and links between nodes.
A question of major interest is how to design the net such that it is easy to communicate from each node to the other
without paying the cost  of constructing a  large number of links.

%Let us call edge the oriented and direct link between one node to another. 
In this paper we study  a quantity which describes the structural complexity of the net.
Given two nodes, it gives the length of the shortest path from one node to another one.
We consider the case where the nodes are given by the partition in $n$-cylinders or $n$-strings
of the phase space:
%Given $n\in\N$,
%the nodes are the collection of $n$-strings generated by the process  and 
%the links are those given by the possible 
%evolutions of the process.
%We consider the case where the source string and the addressee string are chosen independently with two (possible different) ergodic measures.
%The nodes are obtained as the elements of a given initial partition of the phase-space.
%The edges are generated by the shift transformation.
Specifically we consider a  finite or countable set $\chi$.
For each $n\in\N$, the nodes correspond to the partition of $n$-cylinders or  $n$-strings of $\chi^\N$.
We consider also two independent probability measures over  $\Omega=\chi^\N$.
The address node is chosen according to a measure $\mu$ and the source node according to a measure $\nu$.
We assume  that both measures are ergodic and that $\mu$ is absolutely continuous with respect to $\nu$,
otherwise the communication could be impossible.
We denote with $\T$ the function that gives the length of the shortest path that communicates two $n$-strings.
%two states of the $n$-th level partition.
The link between this two strings  is driven by the shift operator $\sigma$ % (see definition \ref{shortest}).
%Let $\sigma$ be the shift operator
 over $\Omega$. That is for ${\bf x}=(x_0,x_1,\dots)\in\Omega$ one gets $\sigma{\bf x}=(x_1,x_2,\dots)$. 
 
 Let us introduce the cornerstone for this paper. It is the quantity that gives the minimum number of steps to get from a string to another one, and will be given nextly.

\begin{definition} \label{shortest}
The shortest-path function
is defined by
$$
\T(\bx,\by) = \inf \{k \geq 1 \ : \ y_0^{n-1} \cap \sigma^{-k}(x_0^{n-1}) \neq \emptyset \} .
$$
\end{definition}
Here and ever after we write $x_m^{n}$ as  shorthand of $x_mx_{m+1}\dots x_n$ for any $0 \le m\le n\le \infty$. %To illustrate this definition, notice that if $x=010101010101...$ and $y=011101110111...$ are two periodic realizations, then $T_{10}^{(2)}(x,y)=8$. 
To illustrate this definition, let us take a look in the word ABRACADABRA in three different languages: If $\bx ,\by, \bf z$ are such that $x_0^{10}=$ ABRACADABRA (english), $y_0^{11}=$ AVRAKEHDABRA (aramaic) and $z_0^{12}=$ ABBADAKEDABRA (chaldean).
Then $T_{11}^{(2)}(\bx , \by)=8$ since, considering the firsts 11 letters of $\bx$ and $\by$,
we have to shift 8 times $y_0^{10}$ to be able to connect it with $x_0^{10}$.
Similarly  $T_{11}^{(2)}(\bx , \bf z)=$ $T_{11}^{(2)}(\by , {\bf{z}})=$ $9$. 
Further we have $T_{11}^{(2)}(\by , \bx)= T_{11}^{(2)}(\bf{z} , \by)=$ $T_{11}^{(2)}(\bf{z} , \bx)=$ $10$
and $T_{11}^{(2)}(\bx , \bx) =7, \  T_{12}^{(2)}(\by , \by)=11, \ T_{13}^{(2)}({\bf z} , {\bf z})=12$.

The random variable $T^{(2)}_n$ is a two-dimensional version of the \emph{shortest return function} 
$T_n(\bx)= \inf\{ k \ge 1 \ | \   x_0^{n-1} \cap \sigma^{-k}(x_0^{n-1}) \}$.
That is, it gives the length of the shortest path starting from and arriving to the same node.
Its concentration phenomena has been already studied in \cite{STV, ACS}. A large deviation principle was related to the R\'enyi entropy in \cite{AbVa, HaVa2010, AbCa}. Limiting theorems for  its fluctuations where presented  in \cite{ AbLa, AbGaRa}.
Since $T_n$ considers starting and target sets being the same and $T_n^{(2)}$ allows them coming from different measures, $T_n$ and $\T$  have different nature.
In topological terms: $T_n$ describes a local, while $T_n^{(2)}$ describes a global characteristic of the connection net. 

In this paper we prove three fundamental theorems which describes the net trough the statistical properties of $\T$: Concentration, large deviations and fluctuations.

Firstly we prove that $\T/n$ converges almost surely to one, as $n$ diverges.
Our result holds when $\mu$ has positive entropy and $\nu$ verifies some  specification property
which prevents the net to be extremely sparse.

The concentration of $\T/n$ leads us to  study  its large deviation properties. Namely, the decaying to zero rate of the probability
of this ratio deviating from one. We compute this rate under the additional condition that the measures verify certain regularity condition.
A similar condition was introduced and already related to the existence of a large deviation principle for 
the shortest return function $T_n$ in \cite{AbCa}.% , which is a kind of 1-dimensional version of 
%the shortest path function $\T$.

The limiting rate of the large deviation function of $\T$ is determined by a quantity that deserves attention on its own.
It gives a measure of similarity (or difference) between two measures (see definition \ref{div}). In words, it is the expectation of the marginal distribution of order $k$ of one of them with respect to the other. Since it is symmetric, they role are exchangeable in this definition.
We call it the divergence of order $k$. 
We also study  some of its properties that are used later on in the large deviation principle  for $\T$ above mentioned.
We provide several examples.
In many cases the divergence results on an exponentially decreasing sequence on $k$ and this leads to 
consider its limiting rate. 
One of our main results establishes the existence of the limiting rate function which is far from being evident.
We use a kind of sub-additivity property but with a telescopic technique rather the classical linear one.
We show that in particular, when the two measures coincide, this limit corresponds to the R\'enyi entropy of the measure at argument 
$\beta=2$ (see item $(e)$ of examples \eqref{renyi}). 

To describe the complexity of the net, we study the distribution of the shortest path function.
We compute the distribution of a re-scaled version of $\T$ (namely, $n-\T$) and prove that
it converges to a non-degenerated distribution
which depends on the stationary measures $\mu$ and  $\nu$.
The limiting distribution may depend on an infinite number of parameters if the measures do.
This limiting distribution also depends on the divergence between the measures.
As an application of this theorem we compute the proportion of pairs of $n$-strings which do not overlap (wich we call the \emph{avoiding pairs} set).

%*****************************************

When the subject is the distribution of $\T$ we are not aware of any work which consider its behaviour in 
the context of stationary measures.
There are some works which consider  models of random graphs and present  empirical data
which adjust the distribution of the shortest path to Weibull or Gamma distributions \cite{BaKeRa, Vaz}. 
But even for classical models, for instance Erd\"os-R\'enyi graph,  its full distribution, in a theoretical sense,  has never been considered in the literature  
\cite{ BGHJ, KNBK, Ukk}.

%Distance distribution in random graphs and application to network exploration
%Vincent D. Blondel,* Jean-Loup Guillaume,  Julien M. Hendrickx,à and Raphal M. Jungers¤.
%\rojo{COMPLETAR REFERENCIAS}
%Book
%Lecture Notes in Computer Science
%Volume 8819 2014
%Advances in Intelligent Data Analysis XIII
%13th International Symposium, IDA 2014, Leuven, Belgium, October 30 Ð November 1, 2014. Proceedings \rojo{COMPLETAR REFERENCIAS}
%
%
%*****************************************

Since the random variables $\T$ are defined on the same probability space, we further ask about a stronger convergence.
Our last result shows that \break $n-\T$ does not even converge in probability, and a lower bound for the distance between
two consecutive terms of the sequence $n-\T$ is given.

Finally  we think 
it is important also to highlight the connection  of the shortest path function with the study of the Poincar\'e recurrence statistics.
The waiting time function  introduced by Wyner and Ziv in \cite{WyZi} is a well-studied quantity in the literature.
%\begin{definition} \label{wn}
Given two realizations $\bx, {\bf y} \in \chi^{\N}$, 
it is  the time expected until $x_0^{n-1}$ appears in the realization ${\bf y}$ of another process.  
That is 
$$
W_n({\bf x},{\bf y}) = \inf\{k \geq 1 \ : \ y_k^{k+n-1} = x_0^{n-1}\} \ .
$$
%\end{definition}
%which is the first time a randomly chosen  $n$-string $x_0^{n-1}$ appears in the realization   ${\bf y}$ of another process.
Now, we have that  the shortest path function  is the minimum of  the waiting times of $x_0^{n-1}$, taking the minimum over all the realizations
${\bf z}\in\Omega$ that begin with $y_0^{n-1}$. That is
$$
\T({\bf x},{\bf y}) =  \inf_{ {\bf z} : z_0^{n-1} = y_0^{n-1}}W_n({\bf x},{\bf z}) \  ,
$$
A number of classical results are known for $W_n$.
When both strings are chosen with the same measure, Shields showed
that for stationary ergodic Markov chains $\ln W_n/n \to h$ for almost every pair of realizations, as $n$ diverges and $h$
is the Shannon entropy of the measure \cite{Shi}. %\rojo{\cite{WyZi}}.
%Then Shields
% extended to almost sure convergence  for functions of stationary ergodic Markov chains \cite{Shi}.
 Nobel and Wyner \cite{NoWy} had proven a convergence in probability to the same limit. This result holds for $\alpha-$mixing processes with a certain rate function $\alpha$.
Marlon and Shields extended it to weak Bernoulli processes \cite{MaSh}.
Yet, Shields \cite{Shi} constructed an example of a very weak Bernoulli process in which the limit does not hold. 
Finally Wyner \cite{Wyn} proved that the distribution of $W_n({\bf x})\mu({\bf x})$ converges to the exponential law
for $\psi$-mixing measures.
When both strings are chosen with possibly different measures, and the second one is a Markov chain, 
Kontoyiannis  \cite{Kon}
showed that $\ln W_n/n \to h(\mu) + h(\mu||\nu)$ for $h(\mu||\nu)$ the relative entropy of $\mu$ with respect to $\nu$ .
%\rojo{Agregar as referencias nas referencias}

This paper is organized as follows. In section 2 we introduce the divergence concept. Properties, examples and
the proof of its existence are also included in this section. 
In section 3 we prove the concentration phenomena of the shortest path function.
A large deviation principle is proved in section 4.
The convergence of the shortest path distribution is presented in section 5.
An application to calculate the self-avoiding pairs of strings appears also here.
The non convergence in probability is shown in section 6.

\section{The divergence between two measures}

Let $\mu$ and $\nu$ be two probability measures over the same measurable space $(\Omega, \mathcal{F})$. It is natural to ask if this two measures are related in any sense, and if there is some function to scale this relation. There are in the literature several quantities devoted to answer somehow these questions. We highlight here the {\em mutual information} and the {\em relative entropy} (or {\em Kullback-Leibler divergence}), which was been extensively discussed in the literature (see for instance \cite{CoTo}).

The present section is dedicated to a quantity wich describes the degree of similarity between two given measures. As far as we know, it was never considered in the literature. Its definition will be given as follows.
%This section is dedicated to a  central quantity for this paper. It gives, 
%for two probability measures  $\mu$ and $\nu$ over the same space $(\Omega,\mathcal{F})$,  a degree of similarity between  them.
%se measures.
%We call it  the \emph{divergence} between $\mu$ and $\nu$.

\begin{definition} \label{div}
%Let $\mu$ and $\nu$ two probability measures defined over $(\Omega, \mathcal{F})$, where $\Omega = \chi^{\N}$, and $\mathcal{F}$ is the $\sigma - $algebra generated by the $n-$cylinders in $\chi^{\N}$.
The $k-$divergence between $\mu$ and $\nu$ is defined by:
$$
\E_{\mu,\nu}(k)
 = \displaystyle\sum_{\omega \in \chi^k}\mu\nu(\omega) \ ,
$$
\end{definition} \noindent (here and ever after, by $\mu\nu(\omega)$ we mean $\mu(\omega)\nu(\omega)$).

Let $\mu_k$ ($\nu_k$) be the projection of $\mu$ $(\nu)$ over the first $k$ coordinates of the space. The $k$-divergence is the mean of  $\mu_k$ with respect to $\nu_k$, or vice-versa. 
It is also the inner product of the $|\chi|^k$-vectors with entries given by the probabilities  $\mu(\omega)$  %, \omega\in\chi^k$  
and $\nu(\omega), \omega\in\chi^k$  (in any arbitrary ordering of the strings $\omega$).
Notice that $\E_{\mu,\nu}(k) $ is symmetric ($\E_{\mu,\nu}(k) = \E_{\nu,\mu}(k)$), and that it is not null if, and only if, the support of the two measures have non-empty intersection.
The previous sentence can be interpreted as follows: if the two measures do not communicate, the similarity between them is zero.

%The next result says that the $k-$divergence does not decrease if we open a gap in the strings $\omega \in \chi^k$.
The next result says that the operation of opening a gap does not produce a smaller result.
% if we open a gap in the strings $\omega \in \chi^k$.
As a corollary, we conclude  that %if we concatenate a symbol in the end of the sequences $\omega$, 
the $k-$divergence is not increasing in $k$.
%The  cylinders or finite strings will be denoted by the greek letters $\omega, \xi. \zeta$.
For simplicity, hereafter for $\omega\in\chi^n$ and $\xi\in\chi^m$ we denote by $\omega\xi$ the $n+m$-string constructed by concatenation of 
$\omega$ and $\xi$. Formally $\omega \cap \sigma^{-n}\xi$.

\begin{lemma}\label{cauchy_gap}
Let $i+g+j=k$ be non-negative integers.
 Then
\begin{equation}\label{schwartz}
\E_{\mu,\nu}(k)
\leq 
\sum_{\omega \in \chi^i ; \ \zeta \in \chi^j} \mu\nu (\omega \cap  \sigma^{-(i+g)}\zeta) \ . %\nu\left(y  \sigma^{j+g}\omega\right) \ .
\end{equation}
\end{lemma}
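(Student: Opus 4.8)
The plan is to reduce the inequality to an elementary termwise bound by splitting every length-$k$ string into three consecutive blocks of lengths $i$, $g$ and $j$, and then recognizing the middle block as the ``gap'' that is marginalized away on the right-hand side.

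First I would decompose. Writing each $\eta\in\chi^k$ as a concatenation $\eta=\omega\xi\zeta$ with $\omega\in\chi^i$, $\xi\in\chi^g$, $\zeta\in\chi^j$ (in the paper's notation $\omega\xi\zeta=\omega\cap\sigma^{-i}\xi\cap\sigma^{-(i+g)}\zeta$), the definition of the divergence reads
$$
\E_{\mu,\nu}(k)=\sum_{\omega\in\chi^i,\ \zeta\in\chi^j}\ \sum_{\xi\in\chi^g}\mu(\omega\xi\zeta)\,\nu(\omega\xi\zeta).
$$

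Next I would fix $\omega$ and $\zeta$ and set $a_\xi=\mu(\omega\xi\zeta)\ge0$ and $b_\xi=\nu(\omega\xi\zeta)\ge0$. The heart of the argument is then the bound
$$
\sum_{\xi\in\chi^g}a_\xi\, b_\xi\ \le\ \Big(\sum_{\xi\in\chi^g}a_\xi\Big)\Big(\sum_{\xi\in\chi^g}b_\xi\Big),
$$
which for nonnegative entries is immediate: the difference between the two sides is exactly the sum of the dropped off-diagonal terms $\sum_{\xi\ne\xi'}a_\xi b_{\xi'}\ge0$, so the left-hand side is merely the diagonal part of the nonnegative double sum on the right (this is also a consequence of the Cauchy--Schwarz inequality named in the lemma).

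Finally I would identify the two factors. The family $\{\omega\xi\zeta:\xi\in\chi^g\}$ is precisely the partition of the gapped cylinder $\omega\cap\sigma^{-(i+g)}\zeta$ into its length-$k$ refinements, so by finite additivity $\sum_{\xi}\mu(\omega\xi\zeta)=\mu(\omega\cap\sigma^{-(i+g)}\zeta)$, and likewise for $\nu$. Hence the product of the two factors equals $\mu\nu(\omega\cap\sigma^{-(i+g)}\zeta)$, and summing the termwise bound over $\omega\in\chi^i$ and $\zeta\in\chi^j$ yields \eqref{schwartz}. There is no deep obstacle here; the only care needed is the bookkeeping of the three-block decomposition and the marginalization identity, together with the degenerate cases: $g=0$ gives equality and recovers $\E_{\mu,\nu}(k)$ on the right, while $j=0$ (or $i=0$) places the gap at the end (or at the start), so that $\zeta$ (or $\omega$) is the empty string and the corresponding factor is a cylinder of shorter length. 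This last case, taken with $i=k-1,\ g=1,\ j=0$, also produces at once the announced corollary $\E_{\mu,\nu}(k)\le\E_{\mu,\nu}(k-1)$.
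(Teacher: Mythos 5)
Your proof is correct and follows essentially the same route as the paper: split each length-$k$ cylinder into the three blocks $\omega\xi\zeta$, fix $\omega$ and $\zeta$, bound the sum over the gap block $\xi$, and identify the marginals $\sum_{\xi}\mu(\omega\xi\zeta)=\mu(\omega\cap\sigma^{-(i+g)}\zeta)$ by additivity. The only cosmetic difference is in the one-line elementary step: the paper bounds $\mu(\omega\xi\zeta)\le\mu(\omega\cap\sigma^{-(i+g)}\zeta)$ termwise by monotonicity and then sums $\nu$ over $\xi$, whereas you phrase the same computation as ``diagonal sum $\le$ full product sum,'' which is identical after expanding the product.
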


\begin{proof}
Consider $\omega \in \chi^i; \  \xi\in\chi^g;   \  \zeta \in \chi^j$.
Let us write the cylinder $ \omega \sigma^{-i}\xi \sigma^{-(i+g)}\zeta \in \mathcal{F}_0^{k-1} $ by concatenating the three cylinders above.
By removing the string $\xi$ in $\mu(\omega\xi\zeta)$
\begin{eqnarray*} %\label{inner}
\sum_{\xi \in \chi^g} \mu\nu\left(\omega\xi\zeta\right) 
& \le &
\sum_{\xi \in \chi^g} \mu\left(\omega \cap \sigma^{-(i+g)}\zeta\right)    \nu\left(\omega\xi\zeta\right) \\
& = &
\mu\nu\left(\omega \cap \sigma^{-(i+g)}\zeta\right) \ .
\end{eqnarray*}
Summing over $\omega$ and $\zeta$ in the last display we get (\ref{schwartz}).
\end{proof}

As a direct consequence of the above proposition, we get that the $k-$divergence is monotonic in $k$.

\begin{corollary}\label{monot}
If $k<l$, then $\E_{\mu,\nu}(k) \geq  \E_{\mu,\nu}(l)$.
\end{corollary}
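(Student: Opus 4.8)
The plan is to derive the corollary as an immediate specialization of Lemma \ref{cauchy_gap}, with essentially no further work beyond a well-chosen degenerate setting of the gap parameters. The guiding observation is that inequality \eqref{schwartz} already compares the divergence at one length with a ``gapped'' sum whose \emph{specified} coordinates span a shorter block; pushing the gap all the way to the right boundary turns that gapped sum into a genuine divergence of smaller order.

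Concretely, given $k < l$, I would apply Lemma \ref{cauchy_gap} with total length $l$ playing the role of the lemma's $k$, choosing $i = k$, $g = l-k$, and $j = 0$, so that $i + g + j = l$. On the right-hand side of \eqref{schwartz} the string $\zeta$ then ranges over $\chi^{0}$, i.e.\ it is the empty word and imposes no constraint, so $\omega \cap \sigma^{-(i+g)}\zeta$ collapses to the cylinder determined by $\omega \in \chi^{k}$. The right-hand side thus reduces to $\sum_{\omega \in \chi^{k}} \mu\nu(\omega) = \E_{\mu,\nu}(k)$, while the left-hand side is $\E_{\mu,\nu}(l)$. This yields $\E_{\mu,\nu}(l) \le \E_{\mu,\nu}(k)$, which is exactly the claim.

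There is essentially no obstacle here: the only point requiring a moment's care is that the boundary case $j=0$ be admissible, which is guaranteed by the lemma's hypothesis that $i,g,j$ are non-negative, together with the convention that $\chi^{0}$ is the singleton consisting of the empty word with $\sigma^{-(i+g)}\chi^{0} = \Omega$. If one preferred an argument that avoids that degenerate reading, the same monotonicity also follows in one step from the core estimate used inside the proof of Lemma \ref{cauchy_gap}: writing each $\omega \in \chi^{k+1}$ as $\alpha a$ with $\alpha \in \chi^{k}$ and $a \in \chi$, the bound $\mu(\alpha a) \le \mu(\alpha)$ gives $\sum_{a \in \chi}\mu\nu(\alpha a) \le \mu(\alpha)\sum_{a \in \chi}\nu(\alpha a) = \mu\nu(\alpha)$, and summing over $\alpha \in \chi^{k}$ produces $\E_{\mu,\nu}(k+1) \le \E_{\mu,\nu}(k)$; iterating this one-symbol step over the integers between $k$ and $l$ then closes the general case.
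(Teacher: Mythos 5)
Your proof is correct and follows the same route as the paper: specialize Lemma \ref{cauchy_gap} with a degenerate choice of the parameters $i,g,j$. In fact your choice $i=k$, $g=l-k$, $j=0$ is the right one, whereas the paper's stated choice $i=k$, $g=0$, $j=l-k$ is evidently a typo: with $g=0$ the set $\omega\cap\sigma^{-k}\zeta$ is just the concatenation $\omega\zeta$, so the right-hand side of \eqref{schwartz} equals $\sum_{\eta\in\chi^l}\mu\nu(\eta)=\E_{\mu,\nu}(l)$ and the inequality collapses to the trivial $\E_{\mu,\nu}(l)\le\E_{\mu,\nu}(l)$. Your alternative one-symbol argument is also valid (it is just the lemma's proof with $g=1$, $j=0$, iterated), so either way the corollary stands, with your parameterization repairing the paper's misprint.
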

\begin{proof}
This follows by taking $i=k, g=0, j=l-k$.
\end{proof}
The above corollary proves that the $k-$divergence is a non-increasing function in $k$. In many cases, it decreases at an exponential rate. It is natural to ask about the existence of the limiting rate function, that is
$$
\underline{\mathcal{R}}=   \displaystyle\liminf_{k \to \infty}  \left( -\dfrac{1}{k}\log\E_{\mu,\nu}(k) \right) \ \  ; \ \   
\overline{\mathcal{R}}=   \displaystyle\limsup_{k \to \infty} \left(-\dfrac{1}{k}\log\E_{\mu,\nu}(k) \right) \ .
$$
If both limits are equal, we denote it by $\mathcal{R}$. 
\footnote{Throughout this paper  logarithms can be  taken in  any base.}
%(or even its $\liminf$ and $\limsup$, defined as $\underline{\mathcal{R}}$ and $\overline{\mathcal{R}}$ respectively). 

In what follows, we provide some examples. They illustrate cases for the existence (or not) of $\mathcal{R}$. Finally we state the main result of this section:  a general condition in which the limiting rate exists. Further, in section \ref{LD}, we will relate this limiting rate with a large deviation principle for $\T$.
\medskip
%\rojo{CHANGE   n by k}
%\newpage
\begin{examples} 
%We present some  examples of the existence (or not) of $\mathcal{R}$.
%\begin{itemize} %\vskip2.5cm 
\item[(a)] Let $\mu$ and $\nu$ two independent measures with disjoint supports. Then $\E_{\mu,\nu}(k) = 0 $ for all $k$, and therefore
$
 \mathcal{R} =  \infty \ .
$
\item[(b)] Suppose that  $\mu$ and $\nu$ concentrate their mass in a unique realization $\bx$ of the process. 
Then, for any  ${\bf y} \in \chi^{\N}$, 
$\mu \nu(y_0^{k-1}) = 1$ if, and only if, $y_0^{k-1} = x_0^{k-1}$ (and  zero otherwise). Thus we get
$
\mathcal{R} = 0 \ .
$
\item[(c)] If both measures $\mu$ and $\nu$  have independent and identically distributed marginals we get

$$
\E_{\mu,\nu}(k) 
= \sum_{\omega \in \chi^k}\mu\nu(\omega) 
= \left[\sum_{x_0 \in \chi}\mu\nu(x_0)\right]^k 
=   \E_{\mu,\nu}(1)^k \ .
$$
Therefore, the limit $\mathcal{R}$ exists and is given by
$$
\mathcal{R}
= - \log E_{\mu,\nu}(1) \ .
$$
\item[(d)] Let $\mu$ be a product of Bernoulli measures with parameter $p$ and $\nu$ a product of Bernoulli measures with parameter $1-p$. Then
$$
\E_{\mu,\nu}(k) = \displaystyle\sum_{x_0^{k-1} \in \chi^k}\prod_{i=0}^{k-1}\mu\nu(x_i) = 2^kp^k(1-p)^k \ .
$$
Then we get
$$
\mathcal{R} =-  \log [2p(1-p)] \ .
$$

\item[(e)] \label{renyi} If  $\mu=\nu$, we get that $\mathcal{R} = H_\mu(2)$, where  
$$
H_\mu(\beta) = - \displaystyle\lim_{k \to \infty}\dfrac{1}{k(\beta -1)}\log \displaystyle\sum_{\omega \in \chi^k}\mu^{\beta}(\omega) \ ,
$$
is the R\'enyi entropy of the measure $\mu$ (provided that it exists).

\item[(f)] A case where the limiting rate does not exist: a sequence that doesn't satisfy the law of large numbers.
Let $\chi = \{0,1\}$, and let $\mu$ be a measure concentrated on the realization:
$$
\bx = 0^{2}1^{2^2}0^{2^3}1^{2^4}\cdots 0^{2^k}1^{2^{k+1}}\cdots
$$
where $a^j$ means the $j$-string $aa\cdots a\in \chi^j$.
On the other hand, let $\nu$ be a product of Bernoulli measures with $p \neq 1/2$. 
By a direct computation, we get that
$
\E_{\mu,\nu}(k) = \nu(x_0^{k-1}) \ .
$
Since the proportion of $0$'s and $1$'s in $x_0^{k-1}$ does not converge as $k$ goes to infinity, we get 
$
\underline{\mathcal{R}} \neq \overline{\mathcal{R}} \ .
$
%and so $\mathcal{R}$ doesn't exist.

\item[(g)] Let $\nu$ be an ergodic, positive entropy measure. By the Shannon-McMillan-Breiman Theorem, $- 1/k \log \nu(x_0^{k-1})$ converges to $h(\nu)$, for almost every $\bx \in \chi^{\N}$,where $h(\nu)$ is the entropy of $\nu$. Let $\bx$ be one of such sequences. 
Let $\mu$ be a measure concentrated on $\bx$. Then $\E_{\mu,\nu}(k)= \nu(x_0^{k-1})$ and
$
\mathcal{R} = h(\nu) \ .
$ 
% \end{itemize}
\end{examples}

 The following theorem gives  sufficient conditions for the existence of the limiting rate $\mathcal{R}$. Its proof uses a kind of sub-aditive property. But here, instead of the classical linear iteration of the sub-additivity property, we use a geometric iteration.
To prove the existence of the limiting rate function $\mathcal{R}$ we use a kind of $g$-regular condition
which is a version of the condition introduced in \cite{AbCa}. This condition was used to prove a large deviation principle for the shortest return function $T_n$ of a string to itself.
That principle related the deviations of $T_n$ to the R\'eny entropies of the measure.
 Examples which show its generality and also other properties can be also found there.
%\begin{definition} 

In what follows, we present two quantities that will be very usefull troughout this section.
Let $g$ be a fixed non-negative integer. %Let $\psi_g^+$ %and $\psi_g^-$ be given by
For the measure $\mu$, (resp. $\nu$) set
\begin{equation}     \label{psig}
  \psi_{\mu,g}^+(i,j) = \sup_{\omega\in\chi^i, \ \xi\in\chi^j} \dfrac{\mu( \sigma^{-(i+g)}\xi  \ | \ \omega   )}{\mu(\xi)}  \ ,
\end{equation}
and then  
$$
 \psi_{g}^+ =\max\{ \psi_{\mu,g}^+, \psi_{\nu,g}^+ \} \ .
$$

\medskip
Now we are ready to state the main result of the present section. It provides a general condition for the existence of the limiting rate $\mathcal{R}$.

 \begin{theorem} %Let $ \psi_{g}^+ =\max\{ \psi_{\mu,g}^+, \psi_{\nu,g}^+ \}$.
  Suppose  there exist positive constants $K> 0$ and $\epsilon>0 $ 
  such that 
 \begin{equation}\label{sublog}
  \log \psi_g^+(i,j) \leq K \dfrac{i+j}{[ \log(i+j)]^{1+ \epsilon}} \ .
  \end{equation}
  %and $\psi_{g_\nu}-$regular. 
  Then $\mathcal{R}$ does exist.
  \end{theorem}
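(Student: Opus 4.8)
The plan is to recast the statement as a near-subadditivity property of the sequence $b_k := \log \E_{\mu,\nu}(k)$ and then extract $\lim_k b_k/k$ by a geometric (dyadic) iteration, the classical linear iteration being doomed by error accumulation. First I would combine Lemma~\ref{cauchy_gap} with the definition~\eqref{psig}. Fixing the gap $g$ and writing $i+g+j=k$, the bound~\eqref{psig} yields, for every $\omega\in\chi^i$ and $\zeta\in\chi^j$, the factorizations $\mu(\omega\cap\sigma^{-(i+g)}\zeta)\le\psi_{\mu,g}^+(i,j)\,\mu(\omega)\mu(\zeta)$ and $\nu(\omega\cap\sigma^{-(i+g)}\zeta)\le\psi_{\nu,g}^+(i,j)\,\nu(\omega)\nu(\zeta)$. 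Multiplying these and summing over $\omega,\zeta$ turns the right-hand side of~\eqref{schwartz} into a product, so that
\begin{equation*}
\E_{\mu,\nu}(k)\;\le\;\big(\psi_g^+(i,j)\big)^2\,\E_{\mu,\nu}(i)\,\E_{\mu,\nu}(j),\qquad i+g+j=k.
\end{equation*}
Since $\E_{\mu,\nu}(k)\le\max_{\omega}\nu(\omega)\le 1$ we have $b_k\le 0$, and taking logarithms gives the near-subadditive relation $b_k\le b_i+b_j+2\log\psi_g^+(i,j)$, where by~\eqref{sublog} the defect is at most $\phi(k):=2K\,k/[\log k]^{1+\epsilon}$ (the fixed gap $g$ only shifts lengths by a bounded amount per merge and is absorbed below).

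The engine of the proof is the observation that the defect is summable at the dyadic scale: $\sum_{m\ge 1}\phi(2^m)/2^m\le C\sum_{m\ge1} m^{-(1+\epsilon)}<\infty$, which is exactly where the hypothesis $\epsilon>0$ is used (equivalently, $\sum_n\phi(n)/n^2<\infty$). Applying the submultiplicative inequality with $i=j$ gives $b_{2^{m+1}}\le 2b_{2^m}+\phi(2^{m+1})$, hence $u_{m+1}\le u_m+\phi(2^{m+1})/2^{m+1}$ for $u_m:=b_{2^m}/2^m$; since the increments are summable, $u_m-\sum_{r<m}\phi(2^{r+1})/2^{r+1}$ is non-increasing, and therefore $u_m$ converges to some $\ell\in[-\infty,0]$. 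This is the telescopic step that replaces the classical linear iteration.

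It then remains to pass from the dyadic subsequence to the full sequence, for which I would prove $\limsup_{n} b_n/n\le\ell\le\liminf_{n} b_n/n$. For the upper bound, expand an arbitrary $n$ in base $2$ and merge the corresponding dyadic blocks; there are $O(\log n)$ merges, each contributing a defect $\le\phi(n)$, so the total error is $O(\phi(n)\log n)=o(n)$ and one gets $b_n\le\sum_s 2^{c_s}u_{c_s}+o(n)$, whence $\limsup_{n} b_n/n\le\ell$. For the lower bound, pick blocks of length $n_j$ with $b_{n_j}/n_j\to\liminf_n b_n/n$, and build a dyadic length $2^m$ out of $q\approx 2^m/n_j$ copies of such a block; the decisive point is to combine these copies along a \emph{balanced} binary tree rather than linearly, so that the accumulated defect is $\le Cq\sum_{r}\phi(2^{r})/2^{r}=Cq\,\rho_{n_j}$ with $\rho_{n_j}\to0$ a convergent tail, instead of the useless linear bound $q\,\phi(2^m)$. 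Letting $m\to\infty$ and then $j\to\infty$ yields $\ell\le\liminf_{n} b_n/n$. Combining the two bounds shows that $b_n/n$ converges, so $\mathcal{R}=-\lim_{n} b_n/n$ exists in $[0,\infty]$.

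The step I expect to be the main obstacle is precisely this last one: controlling the error when many blocks are concatenated. A naive linear concatenation of $\sim n$ blocks injects a defect of order $n\cdot\phi(\cdot)$ that corrupts the rate, and the fixed gap $g$ compounds this; the balanced geometric merging works only because~\eqref{sublog} forces the per-scale defects $\phi(2^m)/2^m$ to form a convergent series, the exponent $1+\epsilon>1$ being exactly the threshold for this summability.
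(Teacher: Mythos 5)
Your proposal is correct and takes essentially the same approach as the paper: the identical near-subadditive bound $b_k\le b_i+b_j+2\log\psi^+_g(i,j)$ obtained from Lemma~\ref{cauchy_gap} and \eqref{psig}, followed by geometric (dyadic) iteration whose viability rests on the summability of the per-scale defects $\phi(2^m)/2^m$, which is exactly where $\epsilon>0$ enters in both arguments. The paper merely fuses your two sandwich steps into one---anchoring at a subsequence $(n_t)$ realizing $\liminf f(n)/n$, writing an arbitrary $n$ as $\nt m+r$ and expanding $m$ in base $2$---and its choice of block lengths $\nt 2^s-g$ with $\nt=n_t+g$ supplies the exact bookkeeping (merging two blocks of length $\nt 2^{s-1}-g$ with gap $g$ gives length $\nt 2^{s}-g$) that justifies the gap-absorption you leave implicit in statements like $b_{2^{m+1}}\le 2b_{2^m}+\phi(2^{m+1})$, which with $g>0$ only holds after such an adjustment of the tracked lengths.
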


  For instance if $\mu$ and $\nu$ have independent marginals,
  we take $g=0$. 
 Immediate calculations give $\psi_{\mu,g}^+(i,j) = \psi_{\nu,g}^+(i,j) = 1 $ for all $i$ and $j$, and condition \eqref{sublog} is satisfied. Moreover, if $\mu$ and $\nu$ are stationary measures of  irreducible, aperiodic,  positive recurrent Markov Chain in a finite alphabet $\chi$ then, 
the Markov property gives
  $$
   \psi_{\mu,0}^+(i,j) =  \sup_{\omega\in\chi  ,  \xi\in\chi} \dfrac{\mu( \sigma^{-1}\xi  \ | \ \omega   )}{\mu(\xi)}   \ ,
  $$
  (resp. $\nu$) which is finite since $\chi$ is finite
   and  \eqref{sublog} is  verified.
   Abadi and Carde\~no \cite{AbCa} constructed several examples  of processes of renewal type, with $g$ equal zero and one
   with exponential or sub-exponential measure of cylinders which verifies \eqref{sublog}. 
   Measures $\mu$ which verify the classical $\psi$-mixing condition, for each $g$ fixed, have $\psi^+_{\mu,g}$ 
   constant and thus \eqref{sublog} holds.
   \\ 
  \\
  Now we present the proof for the Theorem.
  \begin{proof}
 % , with $n=j+k+g$ for $j,k$ and $g$ positive integers. 
Let us take $ \omega\in\chi^i,  \xi \in \chi^g, \zeta \in \chi^j$.
As in the proof of Lemma \ref{cauchy_gap},
%\begin{eqnarray*}
$
\sum_{\xi\in\chi^g}  \mu\nu( \omega\xi\zeta) 
\le\mu\nu(\omega  \cap \sigma^{-(i+g)}\zeta) \ .
$
%\end{eqnarray*}
Further, by  \eqref{psig}
$$
\mu(\omega \cap \sigma^{-(i+g)}\zeta) \le
 \psi_{g}^+(i,j)\mu(\omega)\mu(\zeta) \ .
  %\ \ \    \mbox{and} \ \  \  \nu(\omega_n) \leq \psi_{g_{\nu}}^+\nu(\omega_j)\nu(\omega_k) \ .
  $$ 
  And the same holds for $\nu$.
 Call $f(k) = \log \E_{\mu,\nu}(k)$. 
 %We can take the same function $ \psi_{g}^+$ for $\mu$ and $\nu$ without loose of generality.
 Also, call $c_g(i,j)=  2\log\psi_{g}^+(i,j)$.
 Summing up in $\omega$ and $\zeta$ and taking logarithm, 
 by the inequalities above, we conclude that for all $i, j,$
%CUIDADO, SE GMU E GNU SAO DIFERENTES, ENTAO I E J SAO DIFERENTES PARA MU E NU
\begin{eqnarray}
f(i+g+j) 
       & \leq &  \log(\psi_{g}^+(i,j)\psi_{g}^+(i,j))
       + \log\displaystyle\sum_{\omega \in \chi^i} \mu\nu(\omega) 
       + \log\displaystyle\sum_{\zeta \in \chi^j}\mu\nu(\zeta)  \nonumber \\
       & = &   c_g(i,j) +  f(i) + f(j) \ . \label{sub}
       %& = & c_g(i,j)+  f(i) + f(j) \ ,
\end{eqnarray}
%and lemma \ref{fekete} finishes the proof.
Now we use a kind of sub-additivity argument.
Let $(n_t)_{t\in\N}$ an increasing sequence of non-negative integers such that
\begin{equation} 
\displaystyle\liminf_{n \to \infty}\frac{f(n)}{n} = \displaystyle\lim_{t \to \infty}\frac{f(n_t)}{n_t} \ .
\end{equation}
Consider the sequence $\nt= n_t+g,$ with  $t\in \N$.
Fix $t$. % large enough to have  $n_t>g$.
Firstly, for any positive integer $n \ge \nt$  write $n=\nt m+r,$ with  positive integers $m, r$ such that $0\le r < \nt$.  
Apply (\ref{sub}) with $i=\nt m-g, j=r$, and gap $g$ to get
\begin{equation} \label{resto}
f(n) \le c_g(\nt m-g, r) +   f(\nt m-g) + f(r) .
\end{equation}
Now, we write $m$ in base 2. 
For this,  there exist a positive integer $\ell(m)$ and non-negative integers $ \ell_1 < \ell_2<...< \ell_{\ell(m)},$
such that    $m= \sum_{s=1}^{{\ell(m)}}  2^{\ell_s}  $.  
%Put $\ell_{\ell(m)+1}=k$. %and $\delta_\ell$ equal zero or one.
Iterating (\ref{sub}) with $i=\nt  \sum_{s=1}^{u-1} 2^{\ell_{s} } - g $ and $j= \nt  2^{\ell_{u}} -g$, 
for $u=2,\dots,{\ell(m)}$, we have that for the middle term in the right hand side of \eqref{resto}
\begin{equation} \label{eme}
f(\nt m-g) \le  
     \sum_{u=2}^{{\ell(m)}}  \ c_g \left(  \nt   \sum_{s=1}^{u-1} 2^{\ell_{s}} -g, \nt 2^{\ell_{u}}-g \right) 
+   \sum_{u=1}^{{\ell(m)} }  f(\nt 2^{\ell_{u}}-g) .
\end{equation}
The first sum in the righthand side is zero in case $\ell(m)=1$.
Finally, we decompose the argument in the last summation.
For any $n\in\N$ of the form $n=\nt 2^\ell-g$ we apply (\ref{sub}) with $i=j=\nt 2^{\ell-1}-g$, 
%$j=n_t 2^{k-1}-g$ 
to get
$$
f(\nt 2^\ell-g) \le c_g(\nt 2^{\ell-1}-g, \nt 2^{\ell-1}-g) +  2 f(\nt 2^{\ell-1}-g) .
$$
An iteration of the above inequality leads to
\begin{equation} \label{potencia}
f(\nt 2^\ell-g) \le \sum_{s=0}^{\ell-1} 2^{\ell-s-1} c_g( \nt 2^{s}-g, \nt 2^{s}-g) +    2^{\ell} f(\nt-g) .
\end{equation} 
Observe that $\nt-g=n_t$.
Collecting \eqref{resto}, \eqref{eme}, \eqref{potencia}
we conclude that the limit superior of $f(n)/n$
%$\limsup_{n \to \infty}\frac{f(n)}{n}$ 
is upper bounded by the limit superior of $I+II+III+IV$ where
\begin{eqnarray*}
I&=& \frac{ c_g(\nt m-g, r) }{\nt m-g+r}  \ , \\
II&=&  \frac{1}{\nt m}
           \sum_{u=2}^{{\ell(m)}}  \ c_g \left(  \nt   \sum_{s=1}^{u-1} 2^{\ell_{s}} - g, \nt 2^{\ell_{u}}-g \right) \ , \\
III&=& \frac{1}{\nt m}
          \sum_{u=1}^{{\ell(m)} }
          \sum_{s=0}^{\ell_u-1} 2^{\ell_u-s-1} c_g( \nt 2^{s}-g, \nt 2^{s}-g)  \ ,   \\
IV&=& \frac{ f(r) }{\nt m} + \frac{\sum_{u=1}^{{\ell(m)} } 2^{\ell_u} f(n_t)}{ \nt m }  \ .
\end{eqnarray*}
As $m$ diverges, the first term in $IV$ vanishes since  $0 \le r < \nt$. %the numerator is bounded. 
The second one is bounded by $f(n_t)/n_t$. 
$I$ goes to zero by \eqref{sublog}.
%\rojo{TEMOS QUE TIRAR O -g  DA SUBSEQUENCIA CONVERGENTE}
%Using a re-parametrization $II$ is bounded by $\log x  \ c_g(x',x)/x$ with $0 \le x'<x$, which goes to zero provided 
%$c_g(x,x)/x$ goes to zero faster than $1/(\log x)^{1+\epsilon}$.
We recall that  $II=0$ in case $\ell(m)=1$. Otherwise
we also use condition \eqref{sublog} to get the following upper bound
$$
\dfrac{K}{m} \sum_{u=2}^{\ell(m)} \dfrac{\sum_{s=1}^{u} 2^{\ell_s}}{\left [\log\left ( \nt\sum_{s=1}^{u} 2^{\ell_s}-2g \right )\right]^{1+\epsilon}} \ .
$$ 
%Now, for the inner summation we trivially have the lower and upper bounds $1$ and $2^{\ell_u+1}$. Thus the above display is bounded by  
%$$
%\frac{1}{ \left [\log\left ( n_t 2^{\ell_u} \right )\right]^{1+\epsilon} }
%\dfrac{2K}{m}   \sum_{u=2}^{\ell(m)}  \frac{ 2^{\ell_u} }{ \left [\log n_t -2g \right]^{1+\epsilon} }  
%{\left [\log\left ( n_t\sum_{s=1}^{u} 2^{\ell_s}-2g \right )\right]^{1+\epsilon}} \ .
%=\dfrac{2K}{ \left [\log n_t -2g\right]^{1+\epsilon} }  \ .
%$$ 
%Since $n\geq 2$, and a (possible) incomplete geometric sum can be bounded above by a complete one, 
The  inner summation  is trivially bounded by
 $3 \leq \sum_{s=1}^{u}2^{\ell_s} \leq 2^{\ell_u +1}$. 
 Since $m=  \sum_{u=1}^{\ell(m)} 2^{\ell_u} $,
 it follows that $II \leq 2K / [\log n_t ]^{1+\epsilon}$. %, wich goes to zero as $t$ diverges.
Lastly using also \eqref{sublog} we get that
$III$ is upper bounded by
$$
\frac{K}{m}
          \sum_{u=1}^{{\ell(m)} } 2^{\ell_u}
          %\sum_{s=0}^{\ell_u-1} \frac{1}{[\log2(s + n_t)]^{1+\epsilon}} \ .
\sum_{s=0}^{\ell_u-1} \frac{1}{[\log \nt 2^{s+1}-2g )]^{1+\epsilon}} \ .
$$
Changing the constant $K$ we can take here logarithm  base 2.
The argument in the logarithm is lower bounded by $n_t 2^{s+1}$. 
Now we use that the sum of a decreasing sequence is bounded above by its first term plus the integral definite by the first and last terms in the sum. Thus, 
the rightmost sum in the above display is bounded by
$$
\sum_{s=1}^{\infty} \frac{1}{[ s + \log n_t ]^{1+\epsilon}} 
\leq
\dfrac{1}{[1+\log{n_t}]^{1+\epsilon}} + 
\dfrac{ 1  }{\epsilon[1+\log{n_t}]^{\epsilon}} \ ,
$$
which  goes to zero as $t$ diverges.
 Summarizing we conclude that
\begin{eqnarray*}
\limsup_{n \to \infty}\dfrac{f(n)}{n}
\le \dfrac{f(n_t )}{n_t}   +  \frac{K }{ [\log n_t ]^{1+\epsilon} } + \dfrac{1}{\epsilon[1+\log{n_t}]^{  \epsilon}}
\ .
\end{eqnarray*}
%In the inequality we used the fact that Cesaro means converges to the same limit as the original sequence in the last but one term.
The inequality holds for every $t$. If we take $t \to \infty$ then
%$$
%\displaystyle\limsup_{n \to \infty}\dfrac{f(n)}{n}  
%\leq   \displaystyle\limsup_{t \to \infty}\dfrac{f(n_t-g)}{n_t}  = \displaystyle\lim_{t \to \infty}\dfrac{f(n_t)}{n_t} = \displaystyle\liminf_{n \to \infty}\dfrac{f(n)}{n} \ ,
%$$
%and that 
we finish the proof since  $f(n_t)/n_t$ converges by hypothesis.

  \end{proof}

\section{Concentration}

The present section is dedicated to study asymptotics for $\T$. 
Intuition says that the bigger is  $n$, the more difficult is to connect two $n$-strings. 
Thus we expect $\T$  increasing.
The question is at which  rate.
The main result of this section says that  $T_n/n$ converges almost surely to one. 
The proof is divided in two parts. First part proves that the limit inferior is lower-bounded by one and the second one states that the limit superior is upper-bounded by one. For the last one we assume that the process verifies the very weak specification property (see def. \eqref{spec} below).

%The next definition can be find in \cite{Sig}, and will give the first condition to the existence of limit \eqref{strong}
There are several definition of specification in the literature of dynamical systems.
The first one was introduced by Bowen\cite{Bow2}. Many others appeared, mainly following him with some divergence in the nomenclature
\cite{Sig,KH}, or in weaker forms (see for instance \cite{SuVaYa,Var1,Ya}). %  \rojo{RODRIGO, COMPLETA AS REFERENCIAS}.
Basically they mean that, for  any  given set of strings, they can be observed (at least in one single realization of the process) with bounded gaps between them. Sometimes it is required  the realization to be periodic.
for simplicity to the reader, we present our condition here. It is easy to see that it is verified  for  a large class of stochastic processes  and is less restrictive than the previous ones. Examples are provided below.

\begin{definition}\label{spec}
 $(\chi^{\N}, \mu, \sigma)$  
 is said to have the very weak specification property (VWSP)
 if %for all $\epsilon>0$ 
 there exists a function  
 $g:\N\to\N$ with $\lim_{n\to\infty}g(n)/n=0,$ that verifies the following:
  %any   $n,m \in \N$ and 
 For any pair of strings $\omega, \xi  \in \chi^{n}$,  %, \xi \in \chi^{m}$, 
  there exists a  $\bx \in \chi^\mathbb{N}$ such that, 
$$ x_0^{n-1}=\omega
\quad \textstyle{and} \quad
x_{n+g(n)}^{2n+g(n)-1}=\xi  \ .
$$
\end{definition}

\begin{examples}
%\begin{itemize}
\item[(a)] Any process with complete grammar  verifies definition \ref{spec} with $g(n)=0$. We recall that a probability measure $\mu$ defined over $\chi^{\N}$ is said to have complete grammar  if, for all  $n\in\N$ we get
%\begin{equation}\label{gram} 
$\mu(\omega) >0 $ for all  $\omega \in \chi^n $.
%\end{equation}

%\item[(b)] If $\chi$ is finite, then it also verifies the VWSP. \rojo{POR QUE?}
\item[(b)]  An irreducible and aperiodic  Markov chain over a finite alphabet $\chi$ and stationary measure $\mu$ verifies the VWSP
with $g(n)\le |\chi|$.
%\end{example}

%\begin{example} 

\item[(c)]  
We first construct a renewal process $(X_n)_{n \in \mathbb{N}}$  as an image of  the House of Cards  Markov chain
$(Y_n)_{n \in \mathbb{N}}$ with
irreducible and aperiodic transition matrix $Q$ given by
 %$y\in \{0,1,2,\dots\}$ %let $Q_Y$be  given by:
\begin{eqnarray*}
Q(y,0)&=& 1-q_y , \\
Q(y,y+1)&=& q_y , 
\end{eqnarray*}
$y\in \{0,1,2,\dots\}$.
%with $q_y=\left(  {y+1}/{y+2}\right)^s$, for some fixed $s>1$, and 0 in other cases.
%given $0<q_y<1$,
Figure 1 represents the transitions of this process.
\begin{figure}[h]
\centering
\includegraphics[width=0.6\columnwidth]{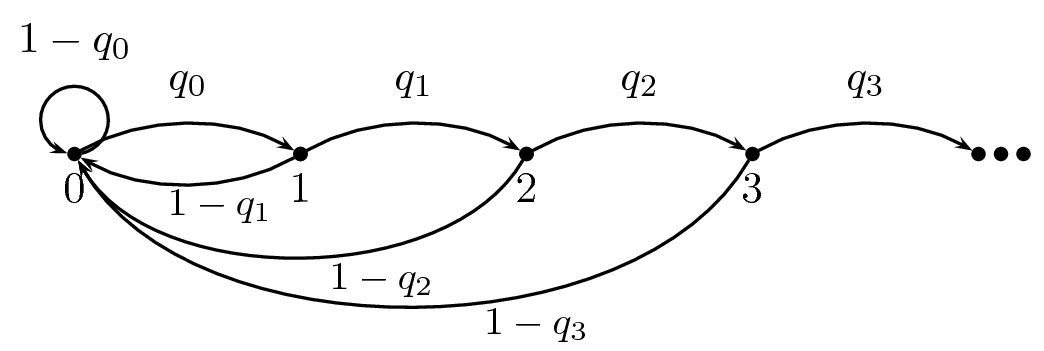}
\caption{House of cards Markov chain $(Y_n)_{n\geq 0}$} \label{ejemploq}
\end{figure}
%Take
%$q_j=\left(\frac{j}{j+1}\right)^s$, with $s>1$.

%We are going to define a process to indicate when the process
 %$\{Y_n\}_{n\geq 0}$ is equal to zero,
%ie.
%Let $\{X_n\}_{n\geq 0}$ the process defined over $\{0,1\}$ indicating that
%the House of Cards reaches zero. That is,
%Let \rojo{$X_n=1$ if $Y_n=0$, and $X_n=0$ if $Y_n\not=0$}, indicating the "renewal" of  $(Y_n)_{n\geq 0}$.

Let $X_n=0$ if $Y_n\not=0$, and $X_n=1$ if $Y_n=0$, indicating the "renewal" of  $(Y_n)_{n\geq 0}$.
Take $q_y= 1$, for  all $n^2 \le y \le n^2+n$ for some $n\in \N$, and any other $0<q_y<1$ for the remaining coefficients 
to warrant that the Markov chain is positive recurrent.
 Obviously $(X_n)_{n\geq 0}$ has not complete grammar.
 It is easy to see that $g(n)\le \sqrt{n}$ and that this bound is actually sharp, taking
$\omega=\xi=01^{n^2-1} \in \chi^{n^2}$. Thus $(X_n)_{n \geq 0}$ verifies the VWSP.
 On the other hand, the stationary measure of the House of Cards Markov chain itself is an example that does not verify the VWSP.

%Thus, the process $\{X_n\}_{n\geq 0}$ is $\beta$-mixing with rate

%\item[(d)]  
%\rojo{TERMINAR ESSE, e depois fazer o exemplo do salto da pulga que desce em 1,2,3,4, ... passos pra dar $g(n) = \sqrt{n}$. FUNCIONA?}
%Consider a irreducible and aperiodic Markov chain in $\chi= \N$ with transition matrix $P$ such that
%has positive entries on its first line (that is 
%$0 <P_{0,n} <1 \ \forall n \in \N$
%For the other lines, set 
%and $0<P_{n,n+1} = 1- P_{n,0} <1$. %(here, this values are chosen in sense of ). 
%It is straightforward to see that this process satisfies definition \ref{spec}, but doesn't verify the complete grammar condition. 
%\end{itemize}
\end{examples}

Now we can state the concentration theorem, which is the main result of this section.

\begin{theorem}\label{linear}
If  $\mu$ has positive entropy, then
\begin{itemize}
\item[(a)]
$\displaystyle\liminf_{n \to \infty} \dfrac{T_n^{(2)}}{n} \geq 1,  \qquad   \mu\times\nu-\mbox{a.e.} $
\item[(b)]
In addition, if  $\nu$ verifies the VWSP, then
%\begin{equation}\label{strong}
$$ \lim_{n \to \infty} \dfrac{T_n^{(2)}}{n}  = 1,   \qquad  \mu\times\nu-\mbox{a.e.} $$
%\end{equation}
\end{itemize}
%\rojo{CONTRAEXEMPLO}
\end{theorem}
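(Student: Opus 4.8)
The proof splits as the statement suggests: part (a) (the lower bound) is the substantive half and rests on the decay of the $\nu$-measure of $\mu$-typical cylinders, while part (b) (the upper bound) is a short consequence of the \textbf{VWSP}. I would treat them in that order.

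For part (a), fix a rational $\alpha<1$. Since $T_n^{(2)}\le \alpha n$ forces, for some $1\le k\le \alpha n<n$, the two $n$-windows to overlap, every realization in the intersection must satisfy the matching condition $y_k^{n-1}=x_0^{n-1-k}$; hence $\{T_n^{(2)}\le\alpha n\}\subseteq\bigcup_{k=1}^{\lfloor\alpha n\rfloor}\{y_k^{n-1}=x_0^{n-1-k}\}$. I would first fix $\bx$ and take probability only over $\by\sim\nu$. By independence and stationarity of $\nu$, each event has probability $\nu(\sigma^{-k}[x_0^{n-1-k}])=\nu(x_0^{n-1-k})$, and since cylinder measures are non-increasing in length the union bound gives $\nu(T_n^{(2)}(\bx,\cdot)\le\alpha n)\le \alpha n\,\nu(x_0^{\lceil(1-\alpha)n\rceil-1})$. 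Averaging this over $\bx$ reproduces $\alpha n\,\E_{\mu,\nu}(\lceil(1-\alpha)n\rceil)$, which explains the role of the divergence; but for an almost-sure statement the per-$\bx$ bound is what I need.

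The heart of the argument is to show that, for $\mu$-a.e. $\bx$, the quantity $\nu(x_0^{m-1})$ decays exponentially. The plan is to combine the Shannon--McMillan--Breiman theorem, which gives $-\tfrac1m\log\mu(x_0^{m-1})\to h(\mu)>0$ $\mu$-a.s., with a Markov-inequality comparison of $\nu$ against $\mu$ on cylinders: since $\mu\{\,\omega\in\chi^m:\nu(\omega)>m^2\mu(\omega)\,\}<1/m^2$ is summable, Borel--Cantelli yields $\nu(x_0^{m-1})\le m^2\mu(x_0^{m-1})$ eventually, whence $\liminf_m-\tfrac1m\log\nu(x_0^{m-1})\ge h(\mu)$ and thus $\nu(x_0^{m-1})\le e^{-m(h(\mu)-\epsilon)}$ eventually, $\mu$-a.s. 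Plugging $m=\lceil(1-\alpha)n\rceil$ into the previous display makes $\sum_n\nu(T_n^{(2)}(\bx,\cdot)\le\alpha n)<\infty$; a second Borel--Cantelli, now over $\by\sim\nu$, gives $T_n^{(2)}(\bx,\by)>\alpha n$ eventually for $\nu$-a.e. $\by$. By Fubini this holds for $\mu\times\nu$-a.e. $(\bx,\by)$, and intersecting over rational $\alpha\uparrow1$ yields $\liminf_n T_n^{(2)}/n\ge1$. I expect this exponential decay of $\nu$ on $\mu$-typical strings to be the main obstacle, precisely because $\mu\ll\nu$ does not control $\nu$ pointwise; the comparison above is what converts positive entropy of $\mu$ into a summable tail, whereas $\E_{\mu,\nu}(m)\to0$ alone need not be fast enough.

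For part (b), the plan is direct. For $\mu$-a.e. $\bx$ one has $\nu(x_0^{n-1})>0$ because $\mu\ll\nu$, so $x_0^{n-1}$ is $\nu$-admissible, and $y_0^{n-1}$ is $\nu$-admissible $\nu$-a.s. Applying the VWSP of $\nu$ to the pair $\omega=y_0^{n-1}$, $\xi=x_0^{n-1}$ produces a point ${\bf z}$ with $z_0^{n-1}=y_0^{n-1}$ and $z_{n+g(n)}^{2n+g(n)-1}=x_0^{n-1}$; taking $k=n+g(n)$ shows the intersection in Definition \ref{shortest} is non-empty, so $T_n^{(2)}(\bx,\by)\le n+g(n)$. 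Since $g(n)/n\to0$, this gives $\limsup_n T_n^{(2)}/n\le1$ a.e., which together with part (a) yields $\lim_n T_n^{(2)}/n=1$ for $\mu\times\nu$-a.e. $(\bx,\by)$.
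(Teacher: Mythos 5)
Your proof is correct, and part (b) coincides with the paper's one-line argument ($\T\le n+g(n)$ from the VWSP, then divide by $n$); but your part (a), while built on the same skeleton, runs through a genuinely different key step. Both arguments start by including $\{\T\le\alpha n\}$ in the union of the matching events $\{y_k^{n-1}=x_0^{n-1-k}\}$ (the sets $R_n^{(2)}(j)$ of Lemma \ref{T_nR_n}), and both convert positive entropy into summable probabilities via Shannon--McMillan--Breiman and Borel--Cantelli. The paper, however, never needs pointwise control of $\nu$ on $\mu$-typical strings: it invokes Egorov's theorem to obtain a set $\Omega_\epsilon$ with $\mu(\Omega_\epsilon)\ge 1-\epsilon$ on which the SMB convergence is uniform, bounds each matching probability by $\sum_{\omega}\mu(\omega)\nu(\omega)\le e^{-j(h-\epsilon)}\sum_{\omega}\nu(\omega)\le e^{-j(h-\epsilon)}$ --- so the $\nu$-factor is simply summed out, only the $\mu$-factor being controlled --- and then applies Borel--Cantelli on the product space restricted to $\Omega_\epsilon\times\Omega$, letting $\epsilon$ be arbitrary at the end. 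You instead fix $\bx$ and work on the fiber, which forces you to establish the pointwise decay $\nu(x_0^{m-1})\le e^{-m(h-\epsilon)}$ for $\mu$-a.e.\ $\bx$; your comparison lemma ($\mu\{\omega\in\chi^m:\nu(\omega)>m^2\mu(\omega)\}<m^{-2}$, then Borel--Cantelli) is exactly the device familiar from the waiting-time literature (Ornstein--Weiss, Shields, Kontoyiannis), and it is followed by a second Borel--Cantelli in $\by$ and Fubini. The trade-offs: the paper's route passes through $\P(R_n^{(2)}(j))=\E_{\mu,\nu}(j)$ (Lemma \ref{medidaRn}) and so ties the concentration directly to the divergence that organizes the whole paper and reappears in the large-deviation section, whereas your route avoids Egorov, makes the fiberwise structure explicit, and isolates the independently useful fact that $\liminf_m -\frac{1}{m}\log\nu(x_0^{m-1})\ge h(\mu)$ for $\mu$-a.e.\ $\bx$ --- a transfer of SMB decay from $\mu$ to $\nu$ that the paper's computation never requires. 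Finally, your extra care in (b) with admissibility (using $\mu\ll\nu$ so that $x_0^{n-1}$ is $\nu$-admissible before invoking the VWSP of $\nu$) addresses a point the paper glosses over.
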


Before proving the above result, let us introduce a family of sets and  a result that will be useful  for the proof. 

\begin{definition}\label{R2}
For each $k \in \{1,\cdots , n-1\}$ define the set of pairs $(\bx, \by)\in \chi^\mathbb{N}\times\chi^\mathbb{N}$ 
such that the firsts $k$ symbols of $x_0^{n-1}$ coincide exactly with the last $k$ symbols of $y_0^{n-1}$.
Namely
$$
R_n^{(2)}(k) = \{ (\bx, \by) \in \chi^{\N}\times\chi^{\N} \ : \ y_{n-k}^{n-1}=x_0^{k-1} \} \ .
$$
\end{definition} 
For instance, if $\bx, \by$ are such that $y_0^{5}=011100$ and $x_0^5=111100$, then $(\bx,\by) \in R_{4}^{(2)}(3)$. \\

The following result establishes a connection between the shortest-path function and the $R^{(2)}_n(i)$- sets.

\begin{lemma}\label{T_nR_n} For $k < n$,  it holds
$$
\{T^{(2)}_n \leq k \} \subseteq \bigcup_{i=n-k}^{n-1}R^{(2)}_n(i) \ .  
$$ In addition, if $\nu$ satisfies the complete grammar condition, then the equality holds.
%\rojo{RODRIGO gramatica completa vale =}
\end{lemma}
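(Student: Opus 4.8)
The plan is to read off both inclusions from the coordinate overlap that a connection of length $k'<n$ imposes. The starting point is the observation that for a shift $1\le k'\le k<n$ the two windows $[0,n-1]$ (occupied by $y_0^{n-1}$) and $[k',k'+n-1]$ (occupied by the shifted copy $x_0^{n-1}$) overlap exactly on $[k',n-1]$, a block of length $n-k'$. Matching the two prescriptions on this block amounts to the single equation $y_{k'}^{n-1}=x_0^{\,n-1-k'}$, which is precisely the defining relation of $\R(i)$ for the index $i=n-k'$. As $k'$ runs through $1,\dots,k$ the index $i=n-k'$ runs through $n-1,\dots,n-k$, which is why the union on the right is taken over $i\in\{n-k,\dots,n-1\}$.

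For the inclusion $\{\T\le k\}\subseteq\bigcup_{i=n-k}^{n-1}\R(i)$ I would argue as follows. If $\T(\bx,\by)\le k$ then, since the infimum over positive integers is attained, there is a connecting shift $k'\le k$; as $k<n$ we have $k'<n$, so the windows genuinely overlap. A connection at shift $k'$ means there is a $\nu$-realization ${\bf z}$ with $z_0^{n-1}=y_0^{n-1}$ and $z_{k'}^{k'+n-1}=x_0^{n-1}$ (equivalently, the intersection in Definition \ref{shortest} meets the support). Evaluating ${\bf z}$ on the overlap block gives $y_j=z_j=x_{j-k'}$ for $k'\le j\le n-1$, i.e.\ $y_{k'}^{n-1}=x_0^{\,n-1-k'}$, so $(\bx,\by)\in\R(n-k')$ with $n-k'\in\{n-k,\dots,n-1\}$. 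This direction is purely combinatorial and uses nothing about the grammar.

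For the reverse inclusion under complete grammar I would take $(\bx,\by)\in\R(i)$ for some $i\in\{n-k,\dots,n-1\}$ and set $k'=n-i\in\{1,\dots,k\}$. The relation $y_{n-i}^{n-1}=x_0^{i-1}$ is exactly the compatibility needed to glue $y_0^{n-1}$ and the shifted block $x_0^{n-1}$ into a single word $w$ on the coordinates $[0,k'+n-1]$, defined by $w_0^{n-1}=y_0^{n-1}$ and $w_{k'}^{k'+n-1}=x_0^{n-1}$. Complete grammar of $\nu$ then guarantees $\nu(w)>0$, hence a $\nu$-realization ${\bf z}$ with $z_0^{k'+n-1}=w$; this ${\bf z}$ witnesses a connection at shift $k'\le k$, so $(\bx,\by)\in\{\T\le k\}$.

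The only real obstacle, and the reason equality is claimed merely under complete grammar, is the passage from a \emph{consistent} glued word to an \emph{admissible} one. The gluing itself is automatic from the $\R(i)$-relation, but for the glued word to correspond to an actual path of the $\nu$-dynamics it must have positive $\nu$-measure; complete grammar supplies exactly this, whereas in its absence the overlap may match while $\nu(w)=0$, breaking the reverse inclusion and leaving only $\subseteq$.
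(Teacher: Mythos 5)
Your proof is correct and takes essentially the same route as the paper's: the forward inclusion is read off from the coordinate overlap forced on a witnessing realization ${\bf z}$ (giving $(\bx,\by)\in R^{(2)}_n(n-k')$ for the connecting shift $k'$), and the reverse inclusion under complete grammar is obtained by gluing $y_0^{n-1}$ to the shifted $x_0^{n-1}$ and using positivity of the glued word's $\nu$-measure to produce the witness ${\bf z}$. Your version is in fact slightly more explicit than the paper's terse proof about why complete grammar is precisely what turns the consistent glued word into an admissible one.
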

\begin{proof} 
By definition, $(\bx,\by)$ belongs to $\{\T \leq k\}$, if and only if,  there is ${\bf z}\in \chi^\N$ and  $1 \leq i \leq k$ such that 
$z_0^{n-1}=y_0^{n-1}$ and $z_i^{i+n-1}=x_0^{n-1}$.  
In particular, since $n-1\ge i$, we have  $y_i^{n-1}=x_0^{n-i-1}$, which in turns says that $(\bx,\by)\in R_n^{(2)}(n-i)$. For the equality, notice that for any pair $(\bx,\by) \in R^{(2)}_n(i)$, $i \in \{n-k, \cdots, n-1\}$ we get that $x_0^{i-1}=y_{n-i}^{n-1}$. The complete grammar condition assures that there exists ${\bf z}\in \chi^\N$ such that 
$z_0^{n-1}=y_0^{n-1}$ and $z_i^{i+n-1}=x_0^{n-1}$.
%, with $\P(z_0^{n-1}) > 0$. 
This concludes the proof.
\end{proof}

The next lemma gives the key connection with  the divergence of $\mu$ and $\nu$. 

From now on we mean by $\P$ the product measure $\mu \times \nu$.
\begin{lemma}\label{medidaRn} For $1 \le k < n$, and $\nu$ a stationary measure, it holds
$$
 \P( R^{(2)}_n(k) )  =  \E_{\mu,\nu}(k)  \  . 
$$
\end{lemma}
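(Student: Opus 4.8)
The plan is to exploit the product structure of $\P=\mu\times\nu$ together with the stationarity of $\nu$, reducing the claim to the very definition of the $k$-divergence. First I would decompose the event $R_n^{(2)}(k)$ according to the common value of the matching block. Since the defining condition $y_{n-k}^{n-1}=x_0^{k-1}$ is equivalent to the existence of a string $\omega\in\chi^k$ with $x_0^{k-1}=\omega$ \emph{and} $y_{n-k}^{n-1}=\omega$, and these alternatives are pairwise disjoint as $\omega$ ranges over $\chi^k$, I would write
$$
\P\big(R_n^{(2)}(k)\big)=\sum_{\omega\in\chi^k}\P\big(x_0^{k-1}=\omega,\ y_{n-k}^{n-1}=\omega\big).
$$

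Next I would use that $\P=\mu\times\nu$ is a product measure and that the first constraint involves only the $\bx$-coordinate while the second involves only the $\by$-coordinate. Hence the two events are independent and the joint probability factorizes:
$$
\P\big(x_0^{k-1}=\omega,\ y_{n-k}^{n-1}=\omega\big)=\mu(\omega)\,\nu\big(y_{n-k}^{n-1}=\omega\big),
$$
where $\mu(\omega)$ is shorthand for the probability that the first $k$ symbols of $\bx$ spell out $\omega$.

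The single place where a hypothesis is genuinely used is the treatment of the $\nu$-factor: by stationarity of $\nu$, the law of the window $y_{n-k}^{n-1}$ of $k$ consecutive coordinates coincides with the law of $y_0^{k-1}$, so $\nu(y_{n-k}^{n-1}=\omega)=\nu(\omega)$. Substituting and summing then yields
$$
\P\big(R_n^{(2)}(k)\big)=\sum_{\omega\in\chi^k}\mu(\omega)\nu(\omega)=\sum_{\omega\in\chi^k}\mu\nu(\omega)=\E_{\mu,\nu}(k),
$$
which is exactly Definition \ref{div}. I do not expect any real obstacle here; the only subtleties worth flagging are bookkeeping ones: confirming that the decomposition over $\omega$ is into disjoint events, so that no overcounting occurs, and making sure the shift in the $\by$-coordinate correctly lands the block $y_{n-k}^{n-1}$ onto $y_0^{k-1}$ — which is valid precisely because $\nu$ is stationary, and is the reason that assumption appears in the statement.
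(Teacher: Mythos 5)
Your proof is correct and follows essentially the same route as the paper's: decompose $R_n^{(2)}(k)$ over the matching block $\omega\in\chi^k$, factor via the product structure of $\P=\mu\times\nu$, and invoke stationarity of $\nu$ to identify $\nu(y_{n-k}^{n-1}=\omega)$ with $\nu(\omega)$. The paper merely compresses these steps into a single displayed identity, leaving the disjointness, factorization, and shift implicit, whereas you spell them out.
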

\begin{proof} If $1 \le k < n,$ then  
$$
\P( R^{(2)}_n(k) ) = \P(x_0^{k-1}=y_{n-k}^{n-1}) = \sum_{\omega \in \chi^k} \mu \nu (\omega) \ .  
$$
Since the last term above is equal to $\E_{\mu,\nu}(k)$, we finish the proof.
%\rojo{RODRIGO}
\end{proof}

Now we are able to prove theorem \ref{linear}.

\begin{proof}[Proof of theorem \ref{linear}]

%\rojo{
%Since definition \ref{spec}  implies that 
%	%\rojo{TEM QUE SER MAIS DETALHADO.} 
%	 $\T \leq n + g(n)$, we divide both sides by $n$ and put $n \to \infty$ to obtain the inequality $(b)$, finishing its proof. Now we proof item $(a)$. 	 }
	 
For item $(a)$, let $h>0$ be the entropy of $\mu$. 
%The strategy of the proof is to show that, for all $0< \epsilon < h$, the event
%$
%\{T^{(2)}_n \leq (1-\epsilon)n\}
%$
%occurs only finitely many times.
Since $\mu$ is ergodic, the Shannon-Mcmillan-Breiman Theorem says that
$$
-\displaystyle\lim_{n \to \infty}\dfrac{1}{n}\log\mu(x_0^{n-1}) = h \ ,
$$
along $\mu$-almost every   $\bx \in \chi^{\N}$.
%Thus, for any $0< \epsilon < h$ and   almost every  $x$, there exists a positive integer $k_0 = k_0(\epsilon, x)$ such that
%\begin{equation}\label{SMB}
%\left|\frac{1}{k}\log\mu(x_0^{k-1}) + h\right| < \epsilon \ ,
%\end{equation}
%for all $k>k_0$.
%%%%%
%Notice that here we get a technicall problem: $k_0$ depends on the choice of $x \in x_1^k$. In order to make this dependece disappears (even in a substet of $\Omega \times \Omega$), we will invoke Egorov's Theorem.
By Egorov's Theorem, for every $0< \epsilon < h$,  there exists a  subset $\Omega_{\epsilon}$ of $ \Omega$, where this convergence is  uniform and $\mu(\Omega_{\epsilon}) \geq 1- \epsilon$. 
That is, 
for all $\epsilon > 0$, there exists a $k_0(\epsilon)$ such that for all $k>k_0(\epsilon)$
\begin{equation}\label{sh}
e^{-k(h+\epsilon)} < \mu(x_0^{k-1}) < e^{-k(h-\epsilon)} \ ,
\end{equation}
for all $\bx\in \Omega_\epsilon$.
Making the product with $\nu,$
%\begin{description}
%\item[(a)]
%$(\mu\times \nu)(\Omega_{\epsilon}\times \Omega)\geq 1- \epsilon$.
%\item[(b)] 
%For all $\epsilon > 0$, there exists a $k_0(\epsilon)$ such that for all $k>k_0(\epsilon)$
%\begin{equation}\label{sh}
%e^{-k(h+\epsilon)} < \mu(x_0^{k-1}) < e^{-k(h-\epsilon)} \ ,
%\end{equation}
%for all $x\in \Omega_\epsilon$.
% inequality (\ref{SMB}) holds for all $k>k_0(\epsilon)$.
%\end{description}
%
%So, for all $k>k_0$, we get:
%\begin{equation}\label{sh}
%e^{-k(h+\epsilon)} < \mu(x_0^{k-1}) < e^{-k(h-\epsilon)} \ .
%\end{equation}
%Using lemmas (\ref{over}) and (\ref{total}), we get:
%We notice that
%
%\rojo{PARTE INTEIRA}
and using lemmas \ref{T_nR_n} and  \ref{medidaRn}    we get
\begin{eqnarray*}
\mathbb{P}\left(\{T_n^{(2)} \leq (1-\epsilon) n\} \ \cap  \ \Omega_{\epsilon} \times \Omega \right)
                                                & \leq &     \displaystyle\sum_{j=\lfloor\epsilon n \rfloor}^{n-1}\mathbb{P} \left(R_n^{(2)}(j) \ \cap  \ \Omega_{\epsilon} \times \Omega \right) \\
                                                %& \leq &  \displaystyle\sum_{j=\epsilon n}^{n-1} \E_{\mu,\nu}(j) \\
                            & = &  \displaystyle\sum_{j=\lfloor \epsilon n \rfloor}^{n-1}\displaystyle\sum_{\omega\in\chi^j \cap \Omega_{\epsilon} } \mu\nu(\omega) \\
                            & \leq &  \displaystyle\sum_{j= \lfloor \epsilon n \rfloor}^{n-1}e^{-j(h-\epsilon)} \ ,
\end{eqnarray*}
where the last inequality 
 was obtained using (\ref{sh}). A direct computation gives
\begin{eqnarray*}
\displaystyle\sum_{n=1}^{\infty}\mathbb{P}(T_n^{(2)} \leq (1-\epsilon)  n) 
& \leq &  %\displaystyle\sum_{n=1}^{\infty}\displaystyle\sum_{j=\epsilon n}^{n-1}e^{-j(h-\epsilon)} \\
%& = & \displaystyle\sum_{n=1}^{\infty}\dfrac{e^{-(n-\delta n)(h-\epsilon)}- e^{-n(h-\epsilon)}}{1-e^{-(h-\epsilon)}} \\
 %& = & 
  \dfrac{   1          }{    1-e^{-(h-\epsilon)}  }   \ .
\end{eqnarray*} 
%The term above is a finite number. 
By Borel-Catelli's Lemma,  $\{T_n^{(2)} \leq (1-\epsilon) n\}$ occurs only finitely many times. %Taking  $\delta=1-\epsilon$, 
We conclude
\begin{equation}\label{fim}
\displaystyle\liminf_{n \to \infty}\dfrac{T_n^{(2)}}{n} \geq 1 - \epsilon \ , \ \ \ \mu\times\nu -  \mbox{a.s. in} \ \ \Omega_{\epsilon}\times \Omega \ .
\end{equation}
Since $\epsilon$ is arbitrary, %and the right term in (\ref{fim}) converges to $1$ when $\epsilon$ goes to zero, we 
this finishes the proof of $(a)$.

Now we prove item $(b)$. Since definition \ref{spec}  implies that 
	%\rojo{TEM QUE SER MAIS DETALHADO.} 
	 $\T \leq n + g(n)$, we divide both sides by $n$ and get
	 $$
	 \displaystyle\limsup_{n \to \infty} \dfrac{T_n^{(2)}}{n} \leq 1,  \qquad   \mu\times\nu-\mbox{a.e.}
	 $$
	 Combining this with $(a)$, we finish the proof of item $(b)$.

\end{proof}

\section{Large deviations}\label{LD}

In the previous section we showed that $\T/n$ concentrates its mass in $1$, as $n$ diverges. Here, we present the deviation rate for this limit. Since the VWSP implies that %there exists a $n_0=n_0(\epsilon)$ such that
$$
\P\left(\dfrac{\T}{n} > 1+ \epsilon\right) = 0, \ \ \forall n > n_0(\epsilon) \ ,
$$
it is  only meaningful to consider the lower deviation.

\begin{definition}
We define the  $\liminf$ and $\limsup$  for  the lower deviation rate, respectively, as
$$
\underline{\Delta}(\epsilon) =  \liminf_{{n\rightarrow\infty}} \dfrac{1}{n}\left|\log\P\left(\dfrac{\T}{n} < 1- \epsilon\right) \right| \ ,
$$
  and  
  $$
 \overline{\Delta}(\epsilon) =   {\limsup_{n\rightarrow\infty}} \dfrac{1}{n}\left| \log \P\left(\dfrac{\T}{n} < 1- \epsilon\right) \right| \  .
 $$
If $\underline{\Delta}=\overline{\Delta}$ we write simply $\Delta$.
\end{definition}

We recall that the complete grammar condition %(see \eqref{gram}) 
assures that \break
 $\T \leq n$.

\begin{theorem} \label{LD}
Let $\mu$ and $\nu$ be two stationary probability measures defined over $\chi^{\N}$. 
Then
\begin{itemize} %\label{delta-}
\item[(a)]     
$\underline{\Delta}(\epsilon) \ge \epsilon \underline{\mathcal{R}} \  $ and
$\overline{\Delta}(\epsilon) \le \epsilon \overline{\mathcal{R}} \ . $
\item[(b)]     Suppose that $\nu$ has complete grammar. Then the equalities hold in $(a)$.
\end{itemize}
\end{theorem}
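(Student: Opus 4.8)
The plan is to sandwich the deviation probability $P_n(\epsilon):=\P(\T/n<1-\epsilon)$ between two multiples of a single divergence term and then read off the rate by applying $-\tfrac1n\log(\cdot)$. Since $\T$ is integer valued, let $k_n$ be the largest integer strictly smaller than $(1-\epsilon)n$ and set $m_n:=n-k_n$, so that $\{\T/n<1-\epsilon\}=\{\T\leq k_n\}$ and $m_n/n\to\epsilon$ as $n\to\infty$. The whole argument then reduces to estimating $\P(\{\T\le k_n\})$ through the sets $R_n^{(2)}(i)$, for which Lemmas \ref{T_nR_n} and \ref{medidaRn} together with the monotonicity of Corollary \ref{monot} are exactly the available tools.

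For the upper bound I would use the general inclusion of Lemma \ref{T_nR_n}, namely $\{\T\le k_n\}\subseteq\bigcup_{i=m_n}^{n-1}R_n^{(2)}(i)$. By Lemma \ref{medidaRn} each summand has probability $\E_{\mu,\nu}(i)$, and by Corollary \ref{monot} these are all dominated by the first one $\E_{\mu,\nu}(m_n)$; since there are fewer than $n$ indices, this yields $P_n(\epsilon)\le n\,\E_{\mu,\nu}(m_n)$. For the lower bound I would retain a single set: if $(\bx,\by)\in R_n^{(2)}(m_n)$ then $y_{k_n}^{n-1}=x_0^{m_n-1}$, so the shift by $k_n$ already makes $y_0^{n-1}\cap\sigma^{-k_n}(x_0^{n-1})\neq\emptyset$ in Definition \ref{shortest}, whence $\T\le k_n$; this gives $P_n(\epsilon)\ge \P(R_n^{(2)}(m_n))=\E_{\mu,\nu}(m_n)$. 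Thus $\E_{\mu,\nu}(m_n)\le P_n(\epsilon)\le n\,\E_{\mu,\nu}(m_n)$.

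Taking $-\tfrac1n\log(\cdot)$ and discarding the negligible $\tfrac1n\log n$, the quantity $-\tfrac1n\log P_n(\epsilon)$ is trapped within $o(1)$ of $\tfrac{m_n}{n}\big(-\tfrac1{m_n}\log\E_{\mu,\nu}(m_n)\big)$. Here $\tfrac{m_n}{n}\to\epsilon$, and since $k_n$ (hence $m_n$) increases by at most one unit as $n\mapsto n+1$, the sequence $(m_n)$ runs through all large integers; consequently the $\liminf$ and $\limsup$ over $n$ of $-\tfrac1{m_n}\log\E_{\mu,\nu}(m_n)$ coincide with $\underline{\mathcal R}$ and $\overline{\mathcal R}$ respectively. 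Using the upper bound alone on the right-hand side produces $\underline{\Delta}(\epsilon)\ge\epsilon\underline{\mathcal R}$, while the lower bound alone produces $\overline{\Delta}(\epsilon)\le\epsilon\overline{\mathcal R}$, which is item (a). For item (b) I would invoke the complete grammar hypothesis, under which Lemma \ref{T_nR_n} is an equality of sets; this supplies the reverse inclusions, hence the matching bounds $\overline{\Delta}(\epsilon)\ge\epsilon\overline{\mathcal R}$ and $\underline{\Delta}(\epsilon)\le\epsilon\underline{\mathcal R}$, turning both inequalities into equalities.

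Two points deserve care. First, the lower bound on $P_n(\epsilon)$ carries the real content: one must check that a single matching block of length $m_n\approx\epsilon n$ already forces $\T\le k_n$. From Definition \ref{shortest} this is immediate, since the matching overlap makes the relevant cylinder intersection non-empty; it is exactly this step that Lemma \ref{T_nR_n} upgrades to a full set-equality under complete grammar, which is what item (b) exploits. Second, some care is required in passing to the rate, because the divergence is naturally evaluated at the non-integer scale $\epsilon n$: one must replace $\epsilon n$ by the true integer $m_n$, control the ratio $m_n/n\to\epsilon$ against the nonnegative sequence $-\tfrac1{m_n}\log\E_{\mu,\nu}(m_n)$ so as to pull the factor $\epsilon$ out of the $\liminf$ and $\limsup$, and use that $(m_n)$ omits no large integer, so that the subsequential $\liminf/\limsup$ reproduce $\underline{\mathcal R}$ and $\overline{\mathcal R}$ exactly rather than merely bounding them.
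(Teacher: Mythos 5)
Your proposal is, in outline, the paper's own argument: sandwich $\P(\T/n<1-\epsilon)$ between the probability of the single set $R_n^{(2)}(m_n)$ and that of the union $\bigcup_{i=m_n}^{n-1}R_n^{(2)}(i)$ (Lemma \ref{T_nR_n}), convert to divergences via Lemma \ref{medidaRn}, dominate the sum by its first term via Corollary \ref{monot}, and take normalized logarithms. On the technical side you are in fact more careful than the paper: the explicit rounding $k_n,m_n$, and the observation that $(m_n)$ increases by at most one unit and therefore omits no large integer (so the subsequential $\liminf/\limsup$ give exactly $\underline{\mathcal R}$ and $\overline{\mathcal R}$), are points the paper compresses into ``take limit \dots an exchange of variables ends the proof''.

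There is, however, one genuine problem: you assert the lower-bound inclusion $R_n^{(2)}(m_n)\subseteq\{\T\le k_n\}$ \emph{unconditionally}, calling it ``immediate from Definition \ref{shortest}'', and you use it to get $\overline{\Delta}(\epsilon)\le\epsilon\,\overline{\mathcal R}$ already in item (a). In the paper's framework this inclusion is precisely the direction of Lemma \ref{T_nR_n} that \emph{requires} complete grammar: the overlap $y_{k_n}^{n-1}=x_0^{m_n-1}$ only makes the concatenation $y_0^{n-1}x_{m_n}^{n-1}$ a consistent string; one still needs an admissible realization ${\bf z}$ of the process carrying it, which is what complete grammar supplies. (If non-emptiness of the cylinder intersection were automatic, the complete-grammar clause of Lemma \ref{T_nR_n}, and item (b) itself, would be vacuous.) Concretely, let $\nu$ be the Dirac measure on $000\cdots$ and $\mu$ the Bernoulli$(1/2)$ product measure: then $\E_{\mu,\nu}(k)=2^{-k}$, so $\epsilon\,\overline{\mathcal R}=\epsilon\log 2$, while the only path available in the support of $\nu$ forces $x_0^{n-1}=0^n$ on the event $\{\T\le k_n\}$, whence $\P(\T/n<1-\epsilon)=2^{-n}\ll\E_{\mu,\nu}(m_n)$; your claimed lower bound fails. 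The paper's proof avoids this by asserting the single-set inclusion only ``in this case'', i.e.\ under complete grammar. Relatedly, your bookkeeping in item (b) is swapped: the reverse inclusion (complete grammar) \emph{lower}-bounds the probability and hence yields the upper bounds $\overline{\Delta}\le\epsilon\,\overline{\mathcal R}$ and $\underline{\Delta}\le\epsilon\,\underline{\mathcal R}$, whereas $\overline{\Delta}\ge\epsilon\,\overline{\mathcal R}$ comes from the forward inclusion. Indeed, since your part (a) already uses both bounds on the probability, within your own logic part (b) would need no hypothesis at all --- a sign that the unconditional lower bound is exactly the step that cannot stand.
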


The $\psi_g -$regularity of the measure assures the existence of $\mathcal{R}$.

\begin{corollary} 
Under conditions of theorem \ref{LD},
suppose yet that $\nu$ has complete grammar. Then 
$\Delta(\epsilon) = \epsilon \mathcal{R}$.
\end{corollary}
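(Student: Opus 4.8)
The plan is to read the corollary as an immediate specialisation of part (b) of Theorem \ref{LD}, once the existence of the limiting rate $\mathcal{R}$ has been secured. Recall first that $\Delta(\epsilon)$ is, by definition, merely a name for the common value of $\underline{\Delta}(\epsilon)$ and $\overline{\Delta}(\epsilon)$ whenever these two coincide. Hence proving the corollary splits into two separate tasks: showing that $\underline{\Delta}(\epsilon)=\overline{\Delta}(\epsilon)$ (so that $\Delta(\epsilon)$ is well defined), and identifying that common value with $\epsilon\mathcal{R}$. Both will follow by collapsing the two one-sided identities supplied by Theorem \ref{LD}(b).

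To secure the existence of $\mathcal{R}$ I would invoke the $\psi_g$-regularity condition \eqref{sublog}, which is exactly what the remark preceding the corollary is pointing to. Under that condition the existence theorem of Section~2 yields $\underline{\mathcal{R}}=\overline{\mathcal{R}}=:\mathcal{R}$. I would stress that this step is genuinely needed and not a formality: under the bare hypotheses of Theorem \ref{LD} (stationarity together with complete grammar of $\nu$) the limit $\mathcal{R}$ may fail to exist, as item~(f) of the examples shows, in which case the very symbol $\mathcal{R}$ in the statement is meaningless. So the hypotheses ensuring $\underline{\mathcal{R}}=\overline{\mathcal{R}}$ must be understood as part of the ``conditions'' the corollary refers to.

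With $\mathcal{R}$ in hand the conclusion is immediate. Since $\nu$ has complete grammar, part (b) of Theorem \ref{LD} asserts that the inequalities of part (a) are in fact equalities, namely $\underline{\Delta}(\epsilon)=\epsilon\underline{\mathcal{R}}$ and $\overline{\Delta}(\epsilon)=\epsilon\overline{\mathcal{R}}$. Substituting $\underline{\mathcal{R}}=\overline{\mathcal{R}}=\mathcal{R}$ gives $\underline{\Delta}(\epsilon)=\overline{\Delta}(\epsilon)=\epsilon\mathcal{R}$; consequently $\Delta(\epsilon)$ exists and equals $\epsilon\mathcal{R}$, which is the claim.

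The argument is short precisely because all of the analytic effort — the union bound through Lemma \ref{T_nR_n}, the identification $\P(R^{(2)}_n(k))=\E_{\mu,\nu}(k)$ of Lemma \ref{medidaRn}, and the monotonicity of Corollary \ref{monot} used to locate the dominant term at index $\sim\epsilon n$ — is already carried out in the proof of Theorem \ref{LD}. The only point deserving care, which I would flag as the main (if modest) obstacle, is the logical bookkeeping around the existence of $\mathcal{R}$: one must make explicit that the statement presupposes $\underline{\mathcal{R}}=\overline{\mathcal{R}}$, and therefore that the regularity theorem of Section~2 is being invoked silently. Once that is acknowledged, passing from the two one-sided identities of Theorem \ref{LD}(b) to the single identity $\Delta(\epsilon)=\epsilon\mathcal{R}$ is purely a matter of merging the $\liminf$ and the $\limsup$.
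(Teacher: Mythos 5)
Your proposal is correct and matches the paper's (implicit) argument exactly: the paper treats this corollary as an immediate consequence of Theorem \ref{LD}(b), with the remark preceding it noting that the $\psi_g$-regularity condition \eqref{sublog} guarantees $\underline{\mathcal{R}}=\overline{\mathcal{R}}=\mathcal{R}$, which is precisely your two-step merge of the one-sided identities. Your explicit flagging that the existence of $\mathcal{R}$ must be read as part of the corollary's hypotheses is a fair and accurate clarification of what the paper leaves tacit.
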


\begin{proof}[Proof of theorem \ref{LD}] By  lemma \ref{T_nR_n}, we have
$$
\left\{\dfrac{T_n^{(2)}}{n} < 1-\epsilon\right\} \subseteq \displaystyle\bigcup_{j=\lceil n\epsilon\rceil}^{n-1}R_n^{(2)}(j) \ ,
$$
with equality if the process has complete grammar.
In this case, considering just the first set in the union, we also have
$$
R_n^{(2)}(\lceil n\epsilon\rceil) \subseteq \
\left\{\dfrac{T_n^{(2)}}{n} < 1-\epsilon\right\} \ .
$$ 
Thus, by lemma \ref{medidaRn}
$$
\E_{\mu,\nu}(\lceil n\epsilon\rceil)\leq \P\left(\dfrac{T_n^{(2)}}{n} < 1-\epsilon \right) 
\leq \sum_{j=\lceil n \epsilon\rceil}^{n-1}\E_{\mu,\nu}(j)\ .
$$
Now we take logarithm, divide by $n$, take limit and use that the divergence is non increasing.
An exchange of variables ends the proof.
\end{proof}

\section{Convergence in law}
In this section we prove the convergence of the normalized distribution of $\T$ to a non-degenerate distribution. 
We also present several examples and provide an application for the main result of the section.

To state the result we need first to introduce the coefficients that appears in the theorem.  
\begin{definition}
Set $a^{(2)}_{n-1,n}= 0$ and for every $1 \leq k \leq n-2$, define:
\begin{itemize}
\item $a^{(2)}_{k,n}= \displaystyle\sum_{m=k+1}^{n-1} \ \displaystyle\sum_{\omega \not\in \cup_{j=k}^{m-1}R_m(j)}\mu\nu(\omega)$ \ .
%\item $a^{(2)}_{n-1,n}= 0$. 
\item $a^{(2)}_{k}% = \displaystyle\lim_{n \to \infty}a^{(2)}_{k,n} 
= \sum_{m=k+1}^{\infty} \ \displaystyle\sum_{\omega \not\in \cup_{j=k}^{m-1}R^{}_m(j)}\mu\nu(\omega)  \ .$
%
%\rojo{
%\item   
%$$
 %\sum_{m=k+1}^{\infty} \ \sum_{\omega \in \cup_{j=1}^{k-1} R_m(j) \cup S_n\le 0  }\mu\nu(\omega)   
%\le a^{(2)}_{k} \le \sum_{m=k+1}^{\infty} \ \sum_{\omega \in \cup_{j=1}^{k-1} R_m(j) \cup S_n\le 0}\mu\nu(\omega)  \ .
%$$
%}
\end{itemize}

Here, we define by $R_n(k)$ a set which is a one dimensional version of $R^{(2)}_n(k)$. Namely
$$
R_n(k) = \{ x_0^{n-1} \in \chi^{n} \ :  %\omega= x_0^{n-1} \ \land
 \  x_{n-k}^{n-1}=x_0^{k-1} \} \ .
$$

\end{definition}

%It is straightforward  to see that the condition above defined implies the specification property (def. \ref{spec}). The converse is not true in general (see, for instance example $2$ in \cite{AbCa}). 

Now we can state the main result of this section.

\begin{theorem}\label{distrib} 
Suppose $\nu$ has complete grammar.
 Then, for all $1 \leq k \leq n-1$, it holds:
\begin{itemize}
\item[(a)] $\mathbb{P}(n-T^{(2)}_n \geq k) = \E_{\mu,\nu}(k) + a^{(2)}_{k,n}$ \ .
\item[(b)] $\displaystyle\lim_{n\to \infty}\P(n-\T \geq k ) = \E_{\mu,\nu}(k) + a^{(2)}_k$ \ .
\end{itemize}
\end{theorem}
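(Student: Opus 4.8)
The plan is to compute the distribution of $n - \T$ directly via the event decomposition already prepared by Lemma \ref{T_nR_n}, and then pass to the limit using monotone/dominated convergence together with Corollary \ref{monot}. For part (a), I would start from the observation that $\{n - \T \ge k\} = \{\T \le n-k\}$, so I need $\P(\T \le n-k)$. Since $\nu$ has complete grammar, Lemma \ref{T_nR_n} gives the \emph{exact} identity $\{\T \le n-k\} = \bigcup_{i=k}^{n-1} R_n^{(2)}(i)$ (substituting $n-k$ for the truncation level in the lemma, so that the union runs from $n-(n-k)=k$ up to $n-1$). The heart of the computation is then an inclusion--exclusion over this union, which I would organize as a telescoping decomposition rather than a full alternating sum: write the union as the disjoint sum over the ``first index of membership.'' Concretely, for the $R_m$-type sets I would peel off the contribution $R_n^{(2)}(k)$, whose probability is exactly $\E_{\mu,\nu}(k)$ by Lemma \ref{medidaRn}, and collect the remaining overlap corrections into the coefficient $a^{(2)}_{k,n}$.

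The key step I expect to carry the combinatorial weight is identifying $a^{(2)}_{k,n}$ with the stated double sum $\sum_{m=k+1}^{n-1}\sum_{\omega\not\in\cup_{j=k}^{m-1}R_m(j)}\mu\nu(\omega)$. The plan is to decompose the union $\bigcup_{i=k}^{n-1}R_n^{(2)}(i)$ according to the \emph{largest} match length, or equivalently to index the points of the union by the maximal $m$ such that a realization lies in $R_n^{(2)}(i)$ for some $i$ in a given range but avoids the shorter-overlap sets. The appearance of the \emph{one-dimensional} sets $R_m(j)$ inside the definition of $a^{(2)}_{k,n}$ strongly suggests that, after summing over the address coordinate $\bx$ against $\mu$ and the source coordinate $\by$ against $\nu$, the two-dimensional overlap conditions on $(\bx,\by)$ factor into a product $\mu\nu(\omega)$ over a single string $\omega$ constrained by self-overlap conditions of the form ``$\omega\not\in R_m(j)$.'' I would therefore make the change of variables identifying the common matched block as a single word $\omega\in\chi^m$, translate each membership $(\bx,\by)\in R_n^{(2)}(i)$ into a self-overlap statement $\omega\in R_m(j)$ for the appropriate shifted index, and verify that the complement-of-union condition $\omega\not\in\cup_{j=k}^{m-1}R_m(j)$ correctly encodes ``maximal match of length $m$ with no shorter periodic coincidence.''

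For part (b) the argument should be comparatively soft. Since $a^{(2)}_{k,n}$ is a partial sum (over $m=k+1,\dots,n-1$) of the nonnegative terms whose infinite sum defines $a^{(2)}_k$, I would argue that $a^{(2)}_{k,n}\uparrow a^{(2)}_k$ as $n\to\infty$ by monotone convergence, provided the series defining $a^{(2)}_k$ converges. Convergence of that series is where I would invoke Corollary \ref{monot}: the inner sum over $\omega$ for each fixed $m$ is bounded above by $\E_{\mu,\nu}(k)$ (it is a sub-sum of the full divergence), and more carefully one bounds the $m$-th term by the probability of the fresh overlap event, which is summable. The term $\E_{\mu,\nu}(k)$ in the formula is independent of $n$, so it passes to the limit unchanged, and combining with the limit of $a^{(2)}_{k,n}$ yields the stated $\E_{\mu,\nu}(k)+a^{(2)}_k$.

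The main obstacle, and the step I would spend the most care on, is the passage between the two-dimensional sets $R_n^{(2)}(i)$ on the product space and the one-dimensional sets $R_m(j)$ appearing in the coefficients. This is a genuine bookkeeping hazard: the indices shift (a match of the first $i$ symbols of $x_0^{n-1}$ with the last $i$ symbols of $y_0^{n-1}$ becomes a self-overlap condition on a word of a different length $m$), and one must be scrupulous that the ``maximal match'' decomposition is exactly disjoint so that no overlap is double-counted or dropped. I would verify the reduction first on the smallest nontrivial case ($k=n-2$, where $a^{(2)}_{n-1,n}=0$ is the stated boundary value) to pin down the index conventions, and only then assert the general telescoping identity.
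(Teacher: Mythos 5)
Your proposal is correct and follows essentially the same route as the paper: the exact identity $\{n-\T\ge k\}=\bigcup_{j=k}^{n-1}R_n^{(2)}(j)$ from Lemma \ref{T_nR_n} under complete grammar, the disjoint decomposition by the first (smallest) index of membership, the reduction of the two-dimensional overlap conditions to one-dimensional self-overlap conditions on the common matched word $\omega$ (which is exactly how $a^{(2)}_{k,n}$ arises), and monotone convergence of the nonnegative partial sums for the limit in (b). One small caveat: your phrase ``largest match length'' is a misnomer for what you actually describe and what the formula encodes, namely exclusion of \emph{shorter} overlaps, i.e.\ the minimal-index decomposition; also, summability of the series for $a^{(2)}_k$ follows simply from the disjointness of the pieces (partial sums are probabilities, hence bounded by $1$), not from Corollary \ref{monot}.
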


In the next examples we discuss several cases  of applications of the above theorem.

\begin{examples}
\item[(a)]  The theorem does not warrant that the limiting object $(\E_{\mu,\nu}(k) + a^{(2)}_k)_{n\in\N}$ actually defines a distribution law.
For instance, take $\mu$  concentrated on the unique sequence $\bx=(11111\dots)$.
Let $\nu_p$ be a product of Bernoulli measures with success probability $p$. Let $0<\lambda<1,$ and define
$\nu=\lambda \mu + (1-\lambda) \nu_p$.
Clearly, $\mu$ is absolutely continuous with respect to $\nu$, which has complete grammar.
It is easy to compute  $\P(\T=1)=\nu(1^n)=\lambda+(1-\lambda)p^n$.
Thus $\P(n-\T =\infty) \ge \lambda$ and $n-\T$ does not converge to a limiting distribution.

\item[(b)] Under mild conditions one gets that the limiting object is actually a distribution.
For that, it is enough to give conditions in which
$\P(n-\T = \infty ) = \lim_{k\to \infty} \E_{\mu,\nu}(k) + a^{(2)}_k =0$. 
Directly from its definition  $a^{(2)}_k  \le  \sum_{j=k+1}^{\infty}\E_{\mu,\nu}(j) $.
Thus, the limiting function defines a distribution if $\sum_{j=k}^{\infty}\E_{\mu,\nu}(j) $ goes to zero as $k$ diverges.
It holds if $\min\{\max_{\omega\in \chi^n}\mu(\omega), \max_{\omega\in \chi^n}\nu(\omega)\}$ is summable.
Notice that this is not the case in the example above. %of item $(a)$.
\item[(c)] When $\mu=\nu$, we recover in the limit, the same limit  distribution of 
the re-scaled  shortest return function $n-T_n$.
In particular, if $\mu$ is a product measure, we recover the limit distribution obtained in \cite{AbLa}.

\item{(d)}
The following example shows a process that has complete grammar,  
and then $n-\T$ converges. 
On the contrary, the re-scaled shortest  return function $n-T_n$ does not converge as shown in \cite{AbGaRa}.
This is due to the fact that the process  is not $\beta-$mixing.
% and  the re-scaled short return function $n-T_n$ does not converge as shown in \cite{AbGaRa}.
The process $(X_n)_{n\geq 0}$  is defined over $\chi=\{0,1\}$ in the following way.
Let $X_0$ be uniformly chosen over $\{0,1\}$ and independent of everything.
The remaining variables are conditionally independent given $X_0$ and defined by 
$$\mu(X_{2n}=X_0)=1-\epsilon=1-\mu(X_{2n-1}=X_0) .$$
It is obvious that  the process has complete grammar,  has a unique invariant measure
with marginal distribution of $X_n, n\ge 1$ being the uniform one.
Take now $\nu=\mu$. So, they verify the hypothesis of theorem \ref{distrib} and therefore $n-\T$ converges.
\end{examples}

%\begin{lemma}
%\[
%a^{(2)}_{k} \le \sum_{m=k+1}^{\infty} \E_{\mu,\nu}(k)
%\le  \sum_{m=k+1}^{\infty} \ \sup_{\omega \in \chi^m }\mu(\omega) \land
 %\sum_{m=k+1}^{\infty} \ \sup_{\omega \in \chi^m }\mu(\omega)
%\]
%\end{lemma}
%\[
%a^{(2)}_{k,n}
%= \sum_{m=k+1}^{n-1} \ \sum_{\omega \not\in \cup_{j=k}^{m-1}R_m(j)}\mu\nu(\omega)
%\le  \sum_{m=k+1}^{n-1} \ \sum_{\omega \in \cup_{j=0}^{k-1}R_m(j)}\mu\nu(\omega)
%\le \sum_{m=k+1}^{n-1} \ \sum_{\omega \in \cup_{j=k-1/2}^{k-1}R_m(j)}\mu\nu(\omega)
%\]
%\[
%\left( \cup_{j=k}^{m-1}R_m(j) \right)^c \subseteq
%\cup_{j=1}^{k-1}R_m(j)  \ \cup \{T_m=m\} =
%\cup_{j=k-1/2}^{k-1}R_m(j)  \ \cup \{T_m=m\}
%\]

%\vskip1cm 
%**********
%\rojo{COLOCAR EM ALGUM CANTO O TEOREMA ABADI E RADA, PQ ELE VAI SER CITADO NA PROXIMA LINHA!!!

%O COROLARIO INCONCLUSO DEVE SER  NA VERDADE ESTIMATIVAS PARA AKN

%TERMINAR ESTA REMARK AUSSI}

%The next result is a direct consequence of theorem \ref{betamixing} in \cite{AbGaRa}. It gives us bounds for $a_k^{(2)}$:
%\begin{corollary} Under the hypothesis of theorem \ref{distrib} QUE EE ISTO? beyond the hypothesis
%\rojo{NAO HA COROLARIO. EE A ESTIMACAO DO AKN? }
%\end{corollary}
%************************

%\vskip1cm

\subsection*{Application}

%It is immediate to observe that $\T=n$ if and only if $n- \T = 0$, and thus 
%$\P(\T = n)= 1- ( \E_{\mu,\nu}(1) + a^{(2)}_{1,n})$.

We call    $\{ (x_0^{n-1}, y_0^{n-1}) \in \chi^n\times\chi^n \ | \ \T(\bx,\by)=n \}$
%$\{\T=n\}$  
the set of \emph{avoiding pairs}, since only in this case the chosen two strings do not overlap.
As far as we know it was never considered in the literature.
A similar quantity was actually considered,  the set of \emph{self-avoiding strings}. It is defined similarly,
but using the shortest return function $T_n$, 
instead of the shortest path $\T$.
 Namely $\{T_n = n\}$.
%\rojo{ EM OUTRO LUGAR Define  the first possible return of the sequence $\omega\in \chi^n$, that is
%$T_n= \inf\{ k \ge 1 \ | \  \omega\cap\sigma^{-k}\omega\not=\emptyset \}$.}
It is read as the  set of strings which do not overlap itself. 
It was first studied in \cite{RoTo}, when the authors treated a problem related to Cellular Automata. 
They considered only the case of a uniform product measure. 
%So, the $n$-strings were generate with respect to a uniform measure over $\chi^n$, with $|\chi| < \infty$. 
Using an argument due to S. Janson, the authors calculate the proportion of self-avoiding strings of length $n$.
%Moreover, they estimate the velocity of the convergence of this proportion to a limiting proportion, when $n$ goes to infinity.
This result was generalized in \cite{AbLa}, and posteriorly  in \cite{AbGaRa}
to independent and $\beta$-mixing processes respectively.

The next result follows immediately from theorem \ref{distrib}, and gives us  the probability of the set of avoiding pairs.

\begin{corollary} Under the conditions of theorem \ref{distrib},
the measure of the avoiding pairs set is given by
\begin{itemize}
\item[(a)] $\P(\T = n ) = 1-\E_{\mu,\nu}(1)-a^{(2)}_{1,n}$.
\item[(b)] $\displaystyle\lim_{n \to \infty}\P(\T = n ) = 1-\E_{\mu,\nu}(1)-a^{(2)}_{1}$.
\end{itemize}
\end{corollary}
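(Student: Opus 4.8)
The plan is to read the avoiding-pairs event as a level set of the rescaled variable $n-\T$ and then invoke Theorem \ref{distrib} at the single value $k=1$. The starting observation is that the set of avoiding pairs is precisely $\{\T=n\}=\{n-\T=0\}$. Under the complete grammar hypothesis we recalled that $\T\le n$, so the event $\{n-\T\ge 0\}$ is the whole space and has probability one. Consequently, for each fixed $n$,
$$
\P(\T=n)=\P(n-\T=0)=\P(n-\T\ge 0)-\P(n-\T\ge 1)=1-\P(n-\T\ge 1).
$$
This reduces part (a) of the corollary to evaluating the tail $\P(n-\T\ge 1)$.

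To finish (a) I would apply Theorem \ref{distrib}(a) with $k=1$. Since $n\ge 2$ the value $k=1$ lies in the admissible range $1\le k\le n-1$, so the theorem gives $\P(n-\T\ge 1)=\E_{\mu,\nu}(1)+a^{(2)}_{1,n}$. Substituting into the displayed identity yields
$$
\P(\T=n)=1-\E_{\mu,\nu}(1)-a^{(2)}_{1,n},
$$
which is exactly statement (a).

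For part (b) I would take the limit $n\to\infty$ in the same identity $\P(\T=n)=1-\P(n-\T\ge 1)$. By Theorem \ref{distrib}(b) with $k=1$ the right-hand tail converges, $\lim_{n\to\infty}\P(n-\T\ge 1)=\E_{\mu,\nu}(1)+a^{(2)}_1$, and hence
$$
\lim_{n\to\infty}\P(\T=n)=1-\E_{\mu,\nu}(1)-a^{(2)}_1,
$$
giving (b). There is essentially no substantive obstacle here, since the corollary is a direct specialization of Theorem \ref{distrib}; the only points requiring a moment of care are the bookkeeping ones, namely verifying that $k=1$ is a legitimate argument of the theorem for every $n\ge 2$ and, crucially, that the complete grammar condition guarantees $\T\le n$ so that $\P(n-\T\ge 0)=1$. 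Without the latter fact the reduction to $1-\P(n-\T\ge 1)$ would fail, so I would make sure to state explicitly that this is where the complete grammar assumption enters.
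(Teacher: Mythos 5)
Your proposal is correct and is essentially the paper's own argument: the paper offers no written proof beyond remarking that the corollary ``follows immediately from theorem \ref{distrib}'', and your deduction is exactly that immediate step made explicit. Namely, complete grammar gives $\T\le n$, so $\P(\T=n)=\P(n-\T=0)=1-\P(n-\T\ge 1)$, and specializing Theorem \ref{distrib} (a) and (b) to $k=1$ yields both claims.
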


\vskip.8cm
Now we proceed to the proof of the main result of this section.

\begin{proof}[Proof of Theorem \ref{distrib}] The main idea of this proof is to bring the two-dimentional problem to a  one-dimensional one.
By  lemma \ref{T_nR_n} %and by a similar argument to that in (\ref{loweren}), 
and since  $\nu$ has complete grammar, we get that
\begin{equation*}
\left\{  n-T_n^{(2)} \geq k \right\}  =  \ \displaystyle\bigcup_{j=k}^{n-1}R_n^{(2)}(j) \ .
\end{equation*}
%Noticing that for a colection of sets $\{A_k\}_{k=m}^{n-1}$ it holds
%\begin{eqnarray*}
%\displaystyle\bigcup_{m=k}^{n-1}A_m & = & A_k \cup (A_{k+1}\setminus A_k) \cup (A_{k+2}\setminus A_{k+1}\cup A_{k}) \cup \cdots \cup (A_{n-1}\setminus \cup_{i=k}^{n-2}A_{i}) \\
 %                                   & = & A_k \cup \bigcup_{m=k+1}^{n-1}(A_m \setminus \cup_{j=k}^{m-1}A_j)
%\end{eqnarray*}
Decompose the right-hand side of the above equality in disjoint sets to get
\begin{equation} \label{ident}
R^{(2)}_n(k) \  \cup \ \displaystyle\bigcup_{j=k+1}^{n-1} \     R^{(2)}_n(j) \setminus \cup_{l=k}^{m-1}R^{(2)}_n(l)   \ .
\end{equation}
%A direct computation gives
By lemma \ref{medidaRn}, $\mathbb{P}(R^{(2)}_n(k)) = \E_{\mu,\nu}(k)$.
%=$ 
% definition and since $\mu$ and $\nu$ are independent and stationarity
%$$
%\mathbb{P}(R^{(2)}_n(k)) 
%= \mu\times\nu((x,y) \ | \ x_0^{k-1}=y_0^{k-1}) 
%= \sum_{\omega \in \chi^k}\mu\nu(\omega)=\E_{\mu,\nu}(k) \ .
 %$$
For the second set in (\ref{ident}), 
 $(\bx, \by) \in R^{(2)}_n(j)$, if and only if
$x_0^{j-1} =y_{n-j}^{n-1}$.
Further, $(\bx, \by)$ does not belong to  $\displaystyle\cup_{l=k}^{m-1}R^{(2)}_n(l)$ if, and only if 
$x_0^{l-1} \neq y_{n-l}^{n-1}$, for all $k \le l \le m-1$.
Since this last two conditions depend only on the values of $x_0,\dots, x_{m-1}, y_{n-m},\dots, y_{n-1}$ we get that the probability of the 
rightmost set  in (\ref{ident}) is equal to 
\begin{eqnarray}\label{cres}
 \displaystyle\sum_{j=k+1}^{n-1}
 \displaystyle\sum_{\omega \not\in \cup_{l=k}^{m-1}R_m(l)}\mu\nu(\omega) 
  =  a_{k,n}^{(2)} \ .
\end{eqnarray}
Since $\E_{\mu,\nu}(k)$ does not depend on $n$, the limit of the probability of the left-side  set in   \eqref{ident} when $n$ goes to infinity only depends on its second term, which is a non-decreasing function on $n$. 
Each term is also bounded above by $1$.
Therefore it converges, and this concludes the proof.
\end{proof}

\section{Non-convergence in probability}

In the present section we show that the convergence of $n-\T$ cannot be stronger than  convergence in distribution. The result is stated as follows.
\begin{proposition}
Under the conditions of  theorem \ref{distrib},
%Suppose $\mu$ is absolutely continuos with respect to $\nu$.
%Let $\mu$ and $\nu$ two stationary measures defined over $\chi^{\N}$.
 %In order to don't get the convergence in probability of $n-\T$(when $n$ goes to infinity), it is sufficient that
%If
%$$
%\E_{\mu,\nu}(1)> 0 \ ,
%$$
 $n-\T$ does not converge in probability.
\end{proposition}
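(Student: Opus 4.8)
The plan is to show that $n - \T$ fails the Cauchy criterion for convergence in probability by exhibiting a constant $\delta>0$ such that $\P\left(|(n-\T) - ((n+1)-T_{n+1}^{(2)})| \geq 1\right)$ does not go to zero along the sequence. Since theorem \ref{distrib} already gives convergence in distribution to a non-degenerate limit, the two modes of convergence must disagree, and the natural way to expose this is to compare consecutive terms of the sequence and bound from below the probability that they differ. Writing $D_n = (n+1 - T_{n+1}^{(2)}) - (n - \T)$, the goal is to find $\delta>0$ with $\liminf_{n\to\infty}\P(|D_n| \geq 1) \geq \delta$, which contradicts the Cauchy property required by convergence in probability.

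First I would translate the event $\{|D_n|\geq 1\}$ into the language of the $R^{(2)}_n(k)$-sets via lemma \ref{T_nR_n}, using that under complete grammar $\{n - \T \geq k\} = \bigcup_{j=k}^{n-1} R^{(2)}_n(j)$. The key observation is that $n-\T$ is determined by the largest overlap index $j$ for which $(\bx,\by)\in R^{(2)}_n(j)$, and passing from $n$ to $n+1$ changes which suffix of $y$ is matched against which prefix of $x$; this typically shifts the maximal overlap and hence the value of $n-\T$. I would therefore estimate $\P(|D_n|\geq 1)$ from below by the probability of a concrete event — for instance, that a short prefix of $\bx$ matches the relevant suffix of $\by$ at level $n$ but not at level $n+1$ (or vice versa). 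Using lemma \ref{medidaRn} these probabilities are controlled by the divergences $\E_{\mu,\nu}(k)$, and the monotonicity from corollary \ref{monot} lets me compare the contributions at consecutive levels.

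Concretely, I expect to produce a lower bound for the distance between two consecutive terms expressed through a difference of divergences, of the form $\E_{\mu,\nu}(k) - \E_{\mu,\nu}(k+1)$ summed over the relevant range, or through the coefficients $a^{(2)}_{k,n}$ appearing in theorem \ref{distrib}. The point is that if $n-\T$ converged in probability, then along large $n$ the events $\{n-\T = k\}$ and $\{(n+1) - T_{n+1}^{(2)} = k\}$ would have to nearly coincide up to vanishing probability; but the mismatch between matching a prefix against the length-$n$ suffix versus the length-$(n+1)$ suffix introduces a persistent positive-probability discrepancy that does not decay. Quantifying this discrepancy and showing it is bounded below uniformly in $n$ is the crux.

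The main obstacle will be controlling the joint law of the two consecutive random variables $n-\T$ and $(n+1)-T^{(2)}_{n+1}$ on the same probability space, rather than just their marginals: convergence in distribution is insensitive to the coupling, so I must work with the genuine joint event and cannot simply subtract limiting distributions. The hard part is therefore to isolate an event, depending on finitely many coordinates of $\bx$ and $\by$ near the matching region, on which the two variables provably differ and whose probability stays bounded away from zero; the stationarity of $\nu$ and the complete-grammar assumption should make the relevant finite-dimensional probabilities computable, but care is needed because the matched suffix of $\by$ moves with $n$, so one must exploit stationarity to keep the estimates uniform. Once such an event is exhibited with probability at least some fixed $\delta>0$ for all large $n$, the failure of the Cauchy criterion, and hence of convergence in probability, follows immediately.
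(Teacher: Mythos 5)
Your high-level strategy---refute the Cauchy property in probability by showing that consecutive terms $n-\T$ and $(n+1)-T^{(2)}_{n+1}$ differ with probability bounded away from zero---is exactly the paper's strategy, but the proposal stops short of the one idea that makes it work, and the concrete event you float would not do the job. You look for an event on which some match present at level $n$ is \emph{destroyed} at level $n+1$ (``a short prefix of $\bx$ matches the relevant suffix of $\by$ at level $n$ but not at level $n+1$''), with a lower bound expected to come from differences $\E_{\mu,\nu}(k)-\E_{\mu,\nu}(k+1)$ or from the $a^{(2)}_{k,n}$. Neither half works as stated: the value of $n-\T$ is governed by the \emph{maximal} overlap, so losing one particular (non-maximal) prefix--suffix match when passing to level $n+1$ is perfectly compatible with $T^{(2)}_{n+1}=\T+1$, i.e.\ with the two rescaled variables being equal; and consecutive differences of the divergence carry no uniform lower bound in general (for product measures $\E_{\mu,\nu}(k)=\E_{\mu,\nu}(1)^k$, so they vanish geometrically), hence no constant $\delta>0$ uniform in $n$ can come from them alone. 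You correctly flag the isolation of a suitable event as ``the hard part,'' but that isolation is precisely the content of the proof, and it is missing. (Also, the opening inference is backwards: a non-degenerate limit in distribution, as in Theorem \ref{distrib}, does not by itself force failure of convergence in probability---a constant sequence $X_n=X$ converges in probability to a non-degenerate limit---though you later acknowledge that the joint law must be used.)

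The paper's resolution goes in the opposite direction from yours: instead of destroying a match, it \emph{extends} the maximal one. On the event $\{T^{(2)}_{n+1}=\T\}$ one has $(n+1-T^{(2)}_{n+1})-(n-\T)=1$, so this event sits inside $\{|(n+1-T^{(2)}_{n+1})-(n-\T)|>\epsilon\}$ for any $\epsilon<1$; no analysis of how the overlap might shift is needed. Its probability is computed by conditioning on $\{\T=k\}$, which pins down the maximal overlap: under complete grammar, $T^{(2)}_{n+1}=k$ given $\T=k$ holds if and only if the single additional symbol matches, $y_n=x_{n-k}$ (a smaller shift at level $n+1$ would give a smaller shift at level $n$, contradicting $\T=k$). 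The conditional probability is identified as $\sum_{a\in\chi}\mu(a)\nu(a)=\E_{\mu,\nu}(1)$, independent of $k$ and of $n$, and strictly positive because $\mu$ is absolutely continuous with respect to $\nu$; summing over $k$ gives $\P(T^{(2)}_{n+1}=\T)=\E_{\mu,\nu}(1)>0$ for every $n$, which is the desired uniform lower bound. This conditioning step---reducing the comparison of consecutive terms to the match of one fresh symbol, paid for at the fixed price $\E_{\mu,\nu}(1)$---is the missing ingredient your outline would need to be completed.
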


\begin{proof}
%It is sufficient to show that the probability of $\{n - \T\}_{n \in \N}$ being a Cauchy sequence does not converge to $1$.

It is sufficient to show that 
$$
\P(|n+1 - T^{(2)}_{n+1} - (n-\T)| > \epsilon) ,
$$
does not converge to zero.
Take $0 < \epsilon < 1$. It is obvious that
$$
\{T^{(2)}_{n+1}=\T \} \subset \{|n+1 - T^{(2)}_{n+1} - (n-\T)| > \epsilon\} \ .
$$
%Suppose $(x,y) \in \{\T=k\}$. Then 
%Note that $(x,y) \in \{ T^{(2)}_{n+1}=\T \}$ if and only if
%In order to get $\{T^{(2)}_{n+1}(x,y)=\T(x,y) \}$, we need to concatenate the sequence 
%$y_0^{n-1}$ with the "correct symbol" $y_{n}$. So, if we get 
%$y_{n}=x_{n-\T(x,y)}$. 
%AQUI PARECE ESTAR USANDO GRAMATICA COMPLETA
%, where $r=n-\T$ it follows that
%\begin{equation}\label{concat}
%y_0^{n} = y_0^{n-1}y_{n} = y_0^{n-1}x_r \ .
%\end{equation}
Conditioning on $\T=k$
%the "correct" symbol 
%$x_{n-\T}$ and using the law of total probability, we get
%\rojo{$x_n$ deve existir}
\[
\P(T^{(2)}_{n+1}=\T) 
 =   \sum_{k=1}^{\infty} \P(T^{(2)}_{n+1}=k \ | \ \T=k ) \P(\T=k)  \ .
\]
Since $\nu$ has complete grammar, the above sum goes just up to $n$.
Further, to get $T^{(2)}_{n+1}=k$ whenever  one has $T^{(2)}_{n}=k$, it is necessary and sufficient 
to have $y_n=x_{n-k}$, due also to the complete grammar.
Thus, $\P(T^{(2)}_{n+1}=k \ | \ \T=k )= \sum_{y_n\in\chi}\mu\nu(y_n)=\E_{\mu,\nu}(1)$ which is positive since $\mu$ is absolutely continuous respect to $\nu$.
%&  = & \sum_{\alpha \in \chi}\P(y_{n}=x_{n-\T} | x_{n-\T}=\alpha)\P(x_{n-\T}=\alpha) \\
%& = &  \sum_{\alpha \in \chi}\nu(\alpha)\mu(\alpha) \\
%& = & \E_{\mu,\nu}(1) \ .
%\end{eqnarray*}
%It is straightforward that $\mu \ll\nu$ implies $\E_{\mu,\nu}(1) > 0$.
%Since this last term is a lower bound for the probability of $n-\T$ to deviate more then $\epsilon$ from $n+1-T^{(2)}_{n+1}$, we 
This finishes the proof.
\end{proof}

%\vskip1cm
%*********************************
%EXEMPLO DO QUE?

%\begin{example}(Measures concetrated in a fixed point.)
%Consider $\mu$ and $\nu$ two independent measures concentrated in $x= 0000\cdots$ and $y=1111\cdots$, respectively. That is,
%$
%\mu(x)=\nu(y) = 1 \ .
%$
	
%For every $n$ we get that  
%$
%\T(x,y) = n \ .
%$
%TN VALE INFINITO E NAO N!!!
%This leads us to
%$$
%n - \T = 0\ , \mu\times\nu \ - \ \mbox{a.e.} \ .
%$$
%Therefore, $n- \T$ converges in probability.
%\end{example}

\section*{Aknowledgements}
We kindly thank A. Rada, B. Saussol and S. Vaienti for useful discussions. We also thank the anonymous referee for her (his) comments and corrections.
This paper is part of R.L.'s Ph.D. Thesis, developed under agreement between University of S\~ao Paulo (CAPES and CNPq SWE-236825/2012-7 grants) and UTLN-France (with CNRS, and BREUDS FP7-PEOPLE-2012-IRSES318999 grants). 
This article is part of the activities of the Project "Statistics of extreme events and dynamics of recurrence" FAPESP Process 	
2014/19805-1.
This article was produced as part of the activities of FAPESP Center for Neuromathematics (grant$\#2013/ 07699-0$ , S.Paulo Research Foundation).

% \section{Coisas que faltam no artigo}
% 
% \begin{itemize}
%\item Estimativas para $a_k^{(2)}$(que saem como corolario do Teorema Abadi \& Rada-Mora). Colocar depois do Tero 5.1 e chamar de algo do tipo: Bounds for... (nÃ£o fazer uma nova subseÃ§Ã£o).
%\item Estimativas para $\E(S_n^{(2)})$ e $\E(S^{(2)}_{\infty})$.
%\item Comparacao entre $a_k^{(2)}$ e $\E_{\mu,\nu}(k)$ no caso $\mu=\nu$(somente uma observacao).
%\item Fazer o exemplo ABABABABA... com $(\epsilon, 1-\epsilon)$.
%\item Calcular o outro lado do L.D.P.
% \end{itemize}

\end{document}